\newtheorem{definition}{Definition}
\newtheorem{theorem}{Theorem}
\newtheorem{lemma}[theorem]{Lemma}
\newtheorem{proposition}[theorem]{Proposition}
\newtheorem{conjecture}[theorem]{Conjecture}
\newtheorem{example}[theorem]{Example}
\newtheorem{remark}[theorem]{Remark}
\newcommand{\mc}{\mathcal}
\newcommand{\mf}{\mathfrak}
\newcommand{\g}{\mathfrak{g}}
\newcommand{\h}{\mathfrak{h}}
\begin{document}
\title[Kronecker coefficients and Harrison centers of representation ring]{Kronecker coefficients and Harrison centers of\\ the representation ring of the symmetric group:\\a computational approach}

\author{Jiacheng Sun$^{2}$, Chi Zhang$^{1}$*, and Haoran Zhu$^{3}$}

\date{\today}

\begin{abstract}
We present a computational approach to studying the structure of the representation ring of the symmetric group in dimension six. The Kronecker coefficients and all power formulae of irreducible representations of $S_6$ are computed using the character theory of finite groups. In addition, considering direct sum decomposition of tensor products of different irreducible representations of $S_6$, we characterise generators of the representation ring $\mathcal{R}(S_6)$, show that its unit group $U(\mathcal{R}(S_6))$ is a Klein four-group, and related results on the structure of primitive idempotents. Furthermore, we introduce the Harrison centre theory to study the representation ring and show that the Harrison centre of the cubic form induced by the generating relations of $\mathcal{R}(S_6)$ is isomorphic to itself. Finally, we conclude with some open problems for future consideration.
\end{abstract}

\maketitle

\noindent
\textbf{MSC 2020:} 20C30, 20C15, 11E76, 05E10

\noindent
\textbf{Keywords:} Harrison center; Kronecker coefficient; power formula; symmetric group; representation ring

\vspace{5mm}

\tableofcontents
    
\section{Introduction}
Kronecker coefficients represent a confluence of ideas from representation theory, algebraic combinatorics, and the burgeoning field of complexity theory. They count the multiplicities of irreducible representations in the tensor product of two other irreducible representations of the symmetric group. In particular, let \( M_{\lambda} \) denote the irreducible representation of \( S_n \) associated with the partition \( \lambda \). For partitions \( \lambda, \mu, \) and \( \nu \) of \( n \), \( g_{\lambda\mu\nu} \) denotes the multiplicity of \( M_{\nu} \) within the tensor product \( M_{\lambda} \otimes M_{\mu} \). The so-called {\em Kronecker problem}, a prominent challenge in algebraic combinatorics, attempts to determine a positive combinatorial formula for these coefficients. However, we often tend to treat special cases because their coefficients are difficult to calculate \cite{BI}.  (See \cite{Ball,Bla,Rem1,Rem2,Rosa} for some known special cases).

One possible idea is to look for rules by calculating some lower-order cases. The earlier part of our paper did this computational work as a preparation. To solve the Kronecker coefficients of the symmetric group $S_6$, we first focus on the specific properties of the character table and then perform a direct sum decomposition of the irreducible tensor product of $S_6$ based on the orthogonality of the characters. 
In the process, we observe a recurrence relation for the $n$-th power formulas of the irreducible representations of $S_6$, thus transforming the problem of solving the coefficients of their power formulas into solving the $(n-1)$-th power of a matrix A.  This makes the solution of the Kronecker coefficients simple and elementary, and it is sufficient to deal with the $n$-th power of the diagonal matrix after similarly diagonalising the original matrix.

In the process of computing the Kronecker coefficients, we must first obtain a decomposition of the tensor product of any two irreducible representations, which naturally inspired us to consider the structure of the representation ring of $S_6$. The concept of representation rings was first introduced by Green \cite{Green} in 1962, hence it is also called {\em Green's ring}. 
There are several results on different types of representation rings, for example classical groups, such as those of unitary groups, were introduced in \cite{Hu}; Minami \cite{Mi} explicitly gave the representation rings of orthogonal groups, later Chen \cite{chen1} studied the Green's ring of the Drinfel'd quantum double of Sweedler's four-dimensional Hopf algebras, they \cite{chen2} also computed the case of Taft algebras; the representation ring of the special linear group $SL(2,\mathbb{C})$ was described by Hamraoui and Khadija \cite{Ham}. For the symmetric group $S_n$, for example, Marian pointed out that for any symmetric group its representation ring is generated by the outer product of its hook-type irreducible representation, i.e. the $(n-1)$-dimensional natural representation \cite{Mar}. In some special cases, such as $n = 5$, the associated structure has been discussed by Dai and Li \cite{DL}.

In this work, we also consider the related properties of representation rings by means of cubic form expressions of generating relations, extending Witt's theory to the higher order, the so-called Harrison center theory \cite{Har}. Here we focus on the connection between the generating relations of the elements of the representation ring of the symmetric group $S_6$ and the resulting Harrison centers of cubic type. Finally, we obtain that the Harrison centers induced by the generating relations of the representation ring of $S_6$ are equivalent to themselves.

This paper is organised as follows: In Section \ref{2}, we first recall some basic contexts that are necessary for this work. In Section \ref{3}, starting from the irreducible characters of $S_6$, we 
give the power formulas of all irreducible representations of $S_6$ with the help of the recurrence relations of tensor product decomposition. 
In Section \ref{4}, we describe the complex representation ring $\mathcal{R}(S_6)$ and some of its properties, such as generators, defining relations, its unit group, primitive idempotents and its determinant and Casimir number. 
In Section \ref{5} we show that, for some certain dimensions $k$, $H_{f_{S_k}}$ must be isomorphic to Harrison centers $Z(f)$. 
Some open problems are left in Section \ref{6} for further study. 

\section{Basic representation theory of finite groups}\label{2}
In this section we will first recall some classical results in representation theory of finite groups which will be useful for our later calculations and proofs. We will refrain from giving a detailed introduction to these results, details of which can be found in \cite{Fe}. 

\begin{definition}
 The representation ring of the group, \( \mathcal{R}(G) \), is a unital commutative ring with \( \mathbb{Z} \)-basis \( \{[V_i] \mid 1 \leq i \leq s \} \), where additions and multiplications are determined by the following equation:
For any two complex representations \( [M],[N] \) of \( G \) then
\[
[M] + [N] = [M \oplus N], \quad [M] \cdot [N] = [M \otimes N], 
\]
where \( \{V_i \mid 1 \leq i \leq s\} \) is the complete set of mutually inequivalent irreducible complex representations of the group \(G \), and \( [M] \) is the isomorphic class corresponding to any finite dimensional complex representation \(M \) of \(G \). 
\end{definition}

\begin{lemma}\label{biaoshi_formula}
Let $G$ be a group and $\mathbb{F}$ be the algebraically closed field satisfying
$\text{char 
 }\mathbb {F} \nmid |G|$. Let $s$ be the number of conjugate classes of $G$, then
\begin{enumerate}
    \item $|\text{Irr}_{\mathbb{F}} G| = s$,
    \item $\text{Irr}_{\mathbb{F}} G = \{\chi_1,\ldots,\chi_s\}$, $n_i$ = \text{deg} $\chi_i$ then we have $$|G| = \sum_{i=1}^{s} n_i^2\text{}$$
\end{enumerate}

\end{lemma}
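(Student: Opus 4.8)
The plan is to deduce both statements from the Artin--Wedderburn structure theorem applied to the group algebra $\mathbb{F}G$, after first checking semisimplicity. First I would invoke Maschke's theorem: since $\operatorname{char}\mathbb{F}\nmid|G|$, the algebra $\mathbb{F}G$ is semisimple, so it decomposes as a finite product of simple two-sided ideals. Because $\mathbb{F}$ is algebraically closed, each simple factor is a full matrix algebra over $\mathbb{F}$ itself (the relevant division algebra is finite-dimensional over the algebraically closed field $\mathbb{F}$, hence equals $\mathbb{F}$), giving an isomorphism $\mathbb{F}G \cong \prod_{i=1}^{t} M_{n_i}(\mathbb{F})$ for some integers $t$ and $n_i$. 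Standard module theory then identifies the simple modules: $M_{n_i}(\mathbb{F})$ contributes exactly one isomorphism class of simple module, of dimension $n_i$, so $t = |\mathrm{Irr}_{\mathbb{F}}G|$ and the $n_i$ are precisely the degrees of the irreducible representations.

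For part (1), I would compute $\dim_{\mathbb{F}} Z(\mathbb{F}G)$ in two ways. On one hand, $Z\bigl(\prod_i M_{n_i}(\mathbb{F})\bigr) = \prod_i Z(M_{n_i}(\mathbb{F})) \cong \mathbb{F}^{t}$, so the center has dimension $t$. On the other hand, a direct computation with the group basis shows that an element $\sum_{g} a_g g$ is central if and only if $a_g$ is constant on conjugacy classes; hence the conjugacy class sums form an $\mathbb{F}$-basis of $Z(\mathbb{F}G)$, and $\dim_{\mathbb{F}} Z(\mathbb{F}G) = s$. Comparing the two counts yields $t = s$, which is exactly $|\mathrm{Irr}_{\mathbb{F}}G| = s$.

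For part (2), I would simply compare total dimensions in the Artin--Wedderburn isomorphism: $\dim_{\mathbb{F}}\mathbb{F}G = |G|$ since the group elements form a basis, while $\dim_{\mathbb{F}}\prod_{i=1}^{s} M_{n_i}(\mathbb{F}) = \sum_{i=1}^{s} n_i^2$. Equating gives $|G| = \sum_{i=1}^{s} n_i^2$, with the $n_i = \deg\chi_i$ by the identification above.

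The routine parts are Maschke's theorem and the dimension bookkeeping; the step that does the real work is pinning down the structure of the simple factors, namely that algebraic closedness forces every Schur division algebra $\operatorname{End}_{\mathbb{F}G}(V_i)$ to be $\mathbb{F}$ (a consequence of Schur's lemma together with the fact that a finite-dimensional division algebra over an algebraically closed field is trivial). I expect this to be the main conceptual obstacle; everything else is a direct comparison of $\mathbb{F}$-dimensions.
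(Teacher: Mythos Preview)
Your argument is correct and is exactly the standard textbook proof via Maschke's theorem and Artin--Wedderburn. Note, however, that the paper does not give its own proof of this lemma: it is listed among the ``classical results in representation theory of finite groups'' in Section~\ref{2}, with the authors explicitly stating that details can be found in \cite{Fe}. So there is nothing to compare against beyond observing that your approach is precisely the one a reader would find in the cited reference.
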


For a given symmetric group $S_n$ we want to find all irreducible representations, and the lemma \ref{biaoshi_formula} will help us to know if we have found all irreducible representations of $S_6$.

\begin{lemma}\label{huan_a}
If $\mathbb{F} = \mathbb{C}$.
$\text{Irr}_\mathbb{F} G = \{\chi_1,\ldots,\chi_s\}$, then
$$\frac{1}{|G|}\sum_{g \in G} \chi_i(g) \overline{\chi_j(g)} = \delta_{ij} = \begin{cases} 1, & i = j \\0. & i \neq j \end{cases} $$
\end{lemma}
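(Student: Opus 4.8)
The plan is to deduce the first orthogonality relation of Lemma \ref{huan_a} from Schur's lemma together with the averaging trick, both of which are available because $\mathrm{char}\,\mathbb{C}=0\nmid|G|$. First I would fix a unitary model $\rho_i\colon G\to U(n_i)$ for each $\chi_i$: starting from any Hermitian inner product on the representation space $V_i$ and averaging it over $G$ produces a $G$-invariant one, with respect to which every $\rho_i(g)$ is unitary. In particular $\rho_i(g^{-1})=\rho_i(g)^{-1}=\overline{\rho_i(g)}^{\,\top}$, hence $\chi_i(g^{-1})=\overline{\chi_i(g)}$ for all $g$ (equivalently, the eigenvalues of $\rho_i(g)$ are roots of unity since $g$ has finite order). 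Thus the left-hand side of the claimed identity equals $\frac{1}{|G|}\sum_{g\in G}\chi_i(g)\,\chi_j(g^{-1})$, which is the quantity directly controlled by representation theory.

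Next, for an arbitrary linear map $T\colon V_j\to V_i$ I would form its $G$-average
$$T^{\natural}:=\frac{1}{|G|}\sum_{g\in G}\rho_i(g)\,T\,\rho_j(g)^{-1},$$
and check by a one-line reindexing that $\rho_i(h)\,T^{\natural}=T^{\natural}\,\rho_j(h)$ for all $h\in G$, i.e.\ $T^{\natural}\in\Hom_G(V_j,V_i)$. Schur's lemma (here one uses that $\mathbb{C}$ is algebraically closed) then gives $T^{\natural}=0$ when $i\neq j$, and $T^{\natural}=\frac{\mathrm{tr}(T)}{n_i}\,\mathrm{Id}_{V_i}$ when $i=j$, the scalar being pinned down by taking traces and using $\mathrm{tr}\big(\rho_i(g)T\rho_i(g)^{-1}\big)=\mathrm{tr}(T)$.

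Finally I would make this explicit in the chosen bases by letting $T$ range over the matrix units $E_{ab}$, which yields the matrix-coefficient orthogonality relations
$$\frac{1}{|G|}\sum_{g\in G}\rho_i(g)_{pa}\,\rho_j(g^{-1})_{bq}=\frac{1}{n_i}\,\delta_{ij}\,\delta_{pq}\,\delta_{ab}.$$
Putting $p=a$, $q=b$ and summing over $a$ and $b$, the left side becomes $\frac{1}{|G|}\sum_{g}\chi_i(g)\chi_j(g^{-1})$ while the right side becomes $\frac{1}{n_i}\delta_{ij}\sum_{a,b}\delta_{ab}=\frac{1}{n_i}\delta_{ij}\,n_i=\delta_{ij}$; combining with $\chi_j(g^{-1})=\overline{\chi_j(g)}$ from the first step gives exactly the asserted formula.

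I do not expect a genuine obstacle: this is the classical first orthogonality relation, and everything reduces to Schur's lemma plus bookkeeping. The only subtle point is the identification $\chi_j(g^{-1})=\overline{\chi_j(g)}$, which really uses that the field is $\mathbb{C}$. If one prefers a basis-free write-up, I could instead note that $\frac{1}{|G|}\sum_{g}\rho_W(g)$ is the projector onto $W^{G}$ for any $\mathbb{C}G$-module $W$, so $\dim W^{G}=\frac{1}{|G|}\sum_{g}\chi_W(g)$, and apply this to $W=V_i\otimes V_j^{\ast}$, whose character is $\chi_i\overline{\chi_j}$ and whose invariants are $\Hom_G(V_j,V_i)$; Schur's lemma gives $\dim\Hom_G(V_j,V_i)=\delta_{ij}$, which is the statement.
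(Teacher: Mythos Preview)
Your argument is correct and is the standard derivation of the first orthogonality relation via Schur's lemma and the averaging operator; the alternative basis-free argument you sketch at the end is equally valid. There is nothing to compare against, however: the paper does not prove Lemma~\ref{huan_a} at all. It is listed in Section~\ref{2} among the ``classical results in representation theory of finite groups'' that are merely recalled, with the reader referred to \cite{Fe} for details. So your proposal supplies strictly more than the paper does; if anything, it could be shortened to a one-line citation in keeping with the paper's own treatment.
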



Here it is not difficult to see that the methods for computing the characters of irreducible representations of the symmetric group $S_6$ are given by the lemma \ref{huan_a}, while the following lemma \ref{huan_c} can be used to quickly determine whether the two irreducible representation tensor products of the symmetric group $S_6$ are equivalent.

\begin{lemma}\label{huan_c}
Two complex representations of the group $G$ are equivalent if and only if they share the same characters.
\end{lemma}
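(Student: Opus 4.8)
The plan is to establish the two implications separately, the forward one being essentially immediate and the converse resting on complete reducibility together with the orthogonality relations of Lemma~\ref{huan_a}.

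For the forward direction, suppose $\rho\colon G\to\GL(V)$ and $\rho'\colon G\to\GL(V')$ are equivalent, and pick a linear isomorphism $T\colon V\to V'$ intertwining them, so that $\rho'(g)=T\rho(g)T^{-1}$ for every $g\in G$. Taking traces and using the cyclic invariance of the trace gives $\chi_{\rho'}(g)=\operatorname{tr}\rho'(g)=\operatorname{tr}\bigl(T\rho(g)T^{-1}\bigr)=\operatorname{tr}\rho(g)=\chi_\rho(g)$, so the characters coincide. There is no subtlety here.

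For the converse I would argue via Maschke's theorem: since $G$ is finite and the ground field $\mathbb{C}$ has characteristic $0$, every complex representation of $G$ is a direct sum of irreducibles, so I may write $\rho\cong\bigoplus_{i=1}^s m_iV_i$ and $\rho'\cong\bigoplus_{i=1}^s m_i'V_i$ with $m_i,m_i'\in\N$, where $\{V_i\mid 1\le i\le s\}$ is the complete set of irreducibles fixed in the setup. Then $\chi_\rho=\sum_i m_i\chi_i$ and $\chi_{\rho'}=\sum_i m_i'\chi_i$. Pairing both sides against $\chi_j$ and invoking the orthogonality relation $\frac1{|G|}\sum_{g\in G}\chi_i(g)\overline{\chi_j(g)}=\delta_{ij}$ of Lemma~\ref{huan_a} recovers each multiplicity as $m_j=\frac1{|G|}\sum_{g\in G}\chi_\rho(g)\overline{\chi_j(g)}$, and likewise for $m_j'$. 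The hypothesis $\chi_\rho=\chi_{\rho'}$ then forces $m_j=m_j'$ for all $j$, so the two representations have isomorphic irreducible decompositions and hence are equivalent.

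The only ingredient used beyond what the excerpt already records is Maschke's complete reducibility theorem; everything else is the orthogonality of irreducible characters from Lemma~\ref{huan_a} plus elementary linear algebra, so I do not anticipate any genuine obstacle. If one prefers to avoid even naming decompositions, an equivalent packaging is to observe that $\chi_1,\dots,\chi_s$ are linearly independent in the space of class functions (again by Lemma~\ref{huan_a}), that the character of any representation lies in their $\N$-span with coordinates given by the inner products above, and that injectivity of the assignment "representation up to isomorphism $\mapsto$ coordinate vector" is precisely the assertion of the lemma.
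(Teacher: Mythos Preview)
Your proof is correct and is the standard argument. Note, however, that the paper does not actually prove this lemma: it is listed among the background facts in Section~\ref{2} with the remark that details can be found in \cite{Fe}, so there is no paper-side proof to compare against. Your argument---trace invariance for the forward direction, Maschke plus the orthogonality of Lemma~\ref{huan_a} for the converse---is exactly what one would expect and would be a perfectly acceptable filling-in of the omitted proof.
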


\begin{lemma}\label{huan_d}
Let $(\rho,V)$ be a complex representation of a finite group $G$ and $V =\oplus_{i=1}^s n_iV_i$ be the decomposition of the direct sum of $G$, then
$$n_i = (\chi_i,\chi),$$
where $\chi$ is the character of $(\rho,V)$ and $\chi_i$ is the character of $(\rho_i,V_i)$.
\end{lemma}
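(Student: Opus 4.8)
The plan is to reduce the statement to the orthonormality of irreducible characters recorded in Lemma~\ref{huan_a}, using only the elementary fact that characters are additive on direct sums. First I would recall that if $W = W' \oplus W''$ as $G$-modules, then choosing a basis of $W$ adapted to this decomposition puts every $\rho_W(g)$ in block-diagonal form, whence $\operatorname{tr}\rho_W(g) = \operatorname{tr}\rho_{W'}(g) + \operatorname{tr}\rho_{W''}(g)$ for all $g \in G$; that is, the character of a direct sum is the sum of the characters. Iterating this over the given decomposition $V = \bigoplus_{i=1}^s n_i V_i$ yields the identity of class functions
\[
\chi = \sum_{j=1}^s n_j \chi_j .
\]

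Next I would pair both sides with $\chi_i$ under the Hermitian form $(\varphi,\psi) = \tfrac{1}{|G|}\sum_{g\in G}\varphi(g)\overline{\psi(g)}$ appearing in Lemma~\ref{huan_a}. Since the $n_j$ are (in particular) real integers, the form is additive in the first argument over this finite sum, giving $(\chi_i,\chi) = \sum_{j=1}^s n_j\,(\chi_i,\chi_j)$. Now invoke Lemma~\ref{huan_a}, which says $(\chi_i,\chi_j) = \delta_{ij}$; the sum collapses to the single term $n_i$, which is exactly the asserted formula.

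The only point requiring care — and the closest thing to an obstacle — is the legitimacy of speaking of \emph{the} decomposition $V = \bigoplus_i n_i V_i$, i.e.\ that $V$ is completely reducible and that the multiplicities $n_i$ are well defined independently of the chosen decomposition. Over $\mathbb{C}$ this is Maschke's theorem together with the uniqueness part of the Jordan--Hölder/Krull--Schmidt phenomenon for semisimple modules; and in fact the formula being proved is itself what makes $n_i$ manifestly intrinsic, since the right-hand side $(\chi_i,\chi)$ depends only on the isomorphism class of $V$ through its character (Lemma~\ref{huan_c}). Once the additivity of characters under direct sums is in hand, no genuine difficulty remains and the proof is a two-line computation.
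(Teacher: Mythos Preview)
Your argument is correct and is exactly the standard proof: additivity of characters on direct sums followed by the orthonormality relations of Lemma~\ref{huan_a}. There is nothing to compare against, however, because the paper states Lemma~\ref{huan_d} without proof---it is one of the classical results recalled in Section~\ref{2} with a reference to \cite{Fe} for details.
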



\begin{lemma}\label{lemma_calculate}
Let $\{(\rho_i,V_i)|i=1,2,\cdots,s\}$ be a complete set of the mutually non-equivalent irreducible complex representations of the finite group $G$, and $V_i\otimes V_j\cong \oplus_{k=1}^s n_{ij}^k V_k$, for any $ 1\leq i,j\leq s$ be the decomposition of the tensor product of the irreducible complex representation of $G$,
$$
n_{ij}^k=\frac{1}{|G|}\sum_{t=1}^s |\mathcal{C}_t|\chi_{it}\chi_{jt}\overline{\chi_{kt}},
$$
where $\mathcal{C}_t$ is the conjugate class of the group $G$ and $\chi_{ij}$ is the value of its complex characters at position $(i,j)$.
\end{lemma}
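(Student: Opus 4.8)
The plan is to derive the formula directly from Lemma~\ref{huan_d}, combined with the elementary fact that the character of a tensor product of representations is the pointwise product of the two characters, and then to reorganise the resulting sum over $G$ into a sum over conjugacy classes. This is a standard computation, so I will keep the steps brief.

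First I would recall that if $(\rho_i,V_i)$ and $(\rho_j,V_j)$ are complex representations of $G$, then for every $g\in G$ the operator induced on $V_i\otimes V_j$ is the Kronecker product $\rho_i(g)\otimes\rho_j(g)$, whose trace factorises as $\mathrm{tr}(\rho_i(g))\,\mathrm{tr}(\rho_j(g))$. Hence the character of $V_i\otimes V_j$ is the class function $\chi_i\chi_j$, obtained by pointwise multiplication of $\chi_i$ and $\chi_j$.

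Next, I would apply Lemma~\ref{huan_d} to the representation $V=V_i\otimes V_j\cong\bigoplus_{k=1}^s n_{ij}^k V_k$: the multiplicity $n_{ij}^k$ equals the Hermitian inner product $(\chi_k,\chi_i\chi_j)$ of class functions, which by Lemma~\ref{huan_a} (and the pairing implicit there) is
$$n_{ij}^k=\frac{1}{|G|}\sum_{g\in G}\chi_i(g)\,\chi_j(g)\,\overline{\chi_k(g)}.$$
Finally, since $\chi_i,\chi_j,\chi_k$ are class functions, the summand depends only on the conjugacy class of $g$. Writing $\mathcal{C}_1,\dots,\mathcal{C}_s$ for the conjugacy classes of $G$ and fixing a representative of each, every element of $\mathcal{C}_t$ contributes $\chi_{it}\chi_{jt}\overline{\chi_{kt}}$ and $|\mathcal{C}_t|$ is the number of such elements, so collecting the terms class by class replaces $\sum_{g\in G}$ by $\sum_{t=1}^s|\mathcal{C}_t|\,\chi_{it}\chi_{jt}\overline{\chi_{kt}}$, which is exactly the asserted identity.

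There is essentially no serious obstacle here; the only points requiring care are bookkeeping ones: identifying the character-table entry $\chi_{it}$ with the common value of $\chi_i$ on the class $\mathcal{C}_t$, and checking that the inner product of characters appearing in Lemma~\ref{huan_d} is the same normalised Hermitian pairing used in Lemma~\ref{huan_a}. If one preferred to avoid invoking Lemma~\ref{huan_d}, an equivalent route would be to expand $\chi_i\chi_j=\sum_k n_{ij}^k\chi_k$ and apply the orthogonality relations of Lemma~\ref{huan_a} directly to solve for $n_{ij}^k$; either way the argument is short and elementary.
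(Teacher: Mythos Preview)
Your argument is correct and is exactly the standard derivation: the character of a tensor product is the pointwise product of characters, so Lemma~\ref{huan_d} gives $n_{ij}^k=(\chi_k,\chi_i\chi_j)$, and since characters are class functions the sum over $G$ collapses to a sum over conjugacy classes weighted by $|\mathcal{C}_t|$. The paper itself does not supply a proof of this lemma; it is listed among the preliminary results from \cite{Fe} that are stated without proof, so your write-up would serve as a suitable justification.
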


\section{Power formulae for the symmetric group 
}\label{3}
Let us start from the complex irreducible characters of the symmetric group $S_6$ (See the specific value in the following table).
{\small
\begin{longtable}{|c|c|c|c|c|c|c|c|c|c|c|c|}
\hline
\diagbox{$Irr_{\mathbb C}S_6$}{$\xi$}                                                                     & \begin{tabular}[c]{@{}c@{}}$g_1$\\~ 1\end{tabular}                            & \begin{tabular}[c]{@{}c@{}}$g_2$\\~15\end{tabular}                              & \begin{tabular}[c]{@{}c@{}}$g_3$\\120\end{tabular}                             & \begin{tabular}[c]{@{}c@{}}$g_4$\\~15\end{tabular}                              & \begin{tabular}[c]{@{}c@{}}$g_5$\\~90\end{tabular}                             & \begin{tabular}[c]{@{}c@{}}$g_6$\\120\end{tabular}                            & \begin{tabular}[c]{@{}c@{}}$g_7$\\~45\end{tabular}                           & \begin{tabular}[c]{@{}c@{}}$g_8$\\~40\end{tabular}                            & \begin{tabular}[c]{@{}c@{}}$g_9$\\144\end{tabular}                           & \begin{tabular}[c]{@{}c@{}}$g_{10}$\\~90\end{tabular}                            & \begin{tabular}[c]{@{}c@{}}$g_{11}$\\~40\end{tabular}                             \endfirsthead
\hline
\begin{tabular}[c]{@{}c@{}}$\chi_1$\\ $\chi_2$\\ $\chi_3$\\ $\chi_4$\\ $\chi_5$\\ $\chi_6$\\ $\chi_7$\\ $\chi_8$\\ $\chi_9$\\ $\chi_{10}$\\ $\chi_{11}$\end{tabular} & \begin{tabular}[c]{@{}c@{}}1\\1\\5\\5\\9\\9\\10\\10\\5\\5\\16\end{tabular} & \begin{tabular}[c]{@{}c@{}}1\\-1\\3\\-3\\3\\-3\\2\\-2\\1\\-1\\0\end{tabular} & \begin{tabular}[c]{@{}c@{}}1~\\-1\\0\\0\\0\\0\\-1\\1\\1\\-1\\0\end{tabular} & \begin{tabular}[c]{@{}c@{}}1\\-1\\-1\\1\\3\\-3\\-2\\2\\-3\\3\\0\end{tabular} & \begin{tabular}[c]{@{}c@{}}1\\-1\\1\\-1\\-1\\1\\0\\0\\-1\\1\\0\end{tabular} & \begin{tabular}[c]{@{}c@{}}1\\-1\\-1\\1\\0\\0\\1\\-1\\0\\0\\0\end{tabular} & \begin{tabular}[c]{@{}c@{}}1\\1\\1\\1\\1\\1\\-2\\-2\\1\\1\\0\end{tabular} & \begin{tabular}[c]{@{}c@{}}1\\1\\2\\2\\0\\0\\1\\1\\-1\\-1\\-2\end{tabular} & \begin{tabular}[c]{@{}c@{}}1\\1\\0\\0\\-1\\-1\\0\\0\\0\\0\\1\end{tabular} & \begin{tabular}[c]{@{}c@{}}1\\1\\-1\\-1\\1\\1\\0\\0\\-1\\-1\\0\end{tabular} & \begin{tabular}[c]{@{}c@{}}1~\\1\\-1\\-1\\0\\0\\1\\1\\2\\2\\-2\end{tabular}  \\
\hline
\caption{The complex irreducible characters of $S_6$}
\end{longtable}}

With the help of the irreducible characters of $S_6$ above and the lemma~\ref{lemma_calculate}, we get the following equivalent relations, as follows.
\begin{theorem}\label{direct_composition}
    For any $i\in \{1,2,\cdots,11\}$, the direct sum decomposition of tensor products $V_1\otimes V_i$, $V_2\otimes V_i$ and $V_3\otimes V_i$ of different irreducible representations of $S_6$ is that 
    \begin{align*}
&V_1 \otimes V_j \cong V_j \otimes V_1 \cong V_j,\quad\text{for any $j\in \{1,2,3,4,5,6,7,8,9,10,11\}$},\\
&V_2\otimes V_i\cong  V_i\otimes V_2\cong\left\{
\begin{aligned}
    & V_{i-1},\quad\text{when $i\in \{2,4,6,8,10\}$}, \\
    & V_{i+1},\quad\text{when $i\in \{3,5,7,9\}$},
\end{aligned}
\right.\\
&V_2\otimes V_{11}\cong V_{11}\otimes V_2\cong V_{11},\\
&V_3\otimes V_3\cong V_1\oplus V_3\oplus V_5\oplus V_7,\\
&V_3\otimes V_4\cong V_4\otimes V_3\cong V_2\oplus V_4\oplus V_6\oplus V_8,\\
&V_3\otimes V_5\cong V_5\otimes V_3\cong V_3\oplus V_5\oplus V_7\oplus V_9\oplus V_{11},\\
&V_3\otimes V_6\cong V_6\otimes V_3\cong V_4\oplus V_6\oplus V_8\oplus V_{10}\oplus V_{11},\\
&V_3\otimes V_7\cong V_7\otimes V_3\cong V_3\oplus V_5\oplus V_7\oplus V_8\oplus V_{11},\\
&V_3\otimes V_8\cong V_8\otimes V_3\cong V_4\oplus V_6\oplus V_7\oplus V_8\oplus V_{11},\\
&V_3\otimes V_9\cong V_9\otimes V_3\cong V_5\oplus V_{11},\\
&V_3\otimes V_{10}\cong V_{10}\otimes V_3\cong V_6\oplus V_{11},\\
&V_3\otimes V_{11}\cong V_{11}\otimes V_3\cong V_5\oplus V_6\oplus V_7\oplus V_8\oplus V_9\oplus V_{10}\oplus 2V_{11}.
\end{align*}
\end{theorem}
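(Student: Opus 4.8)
The plan is to compute each of the listed decompositions directly from Lemma~\ref{lemma_calculate}, using the character table of $S_6$ printed above. Concretely, for the tensor product $V_i \otimes V_j$ and each target $V_k$, the multiplicity is
\[
n_{ij}^k = \frac{1}{720}\sum_{t=1}^{11} |\mathcal{C}_t|\,\chi_{it}\,\chi_{jt}\,\overline{\chi_{kt}},
\]
and since all characters of $S_6$ are real (every element is conjugate to its inverse), the complex conjugation may be dropped. The conjugacy class sizes $|\mathcal{C}_t|$ are the numbers $1,15,120,15,90,120,45,40,144,90,40$ displayed in the header row of the table, and one checks these sum to $720 = |S_6|$.

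The steps I would carry out are as follows. First, I would dispose of the rows involving $V_1$: since $V_1$ is the trivial representation with $\chi_1 \equiv 1$, the identity $V_1 \otimes V_j \cong V_j$ is immediate (or follows from $n_{1j}^k = (\chi_j, \chi_k) = \delta_{jk}$ by Lemma~\ref{huan_a}). Second, I would handle $V_2$, the sign representation: tensoring with the sign character permutes the irreducibles by the conjugate-partition involution, and from the table one reads off that $\chi_2 \cdot \chi_i$ equals $\chi_{i\mp 1}$ in the paired cases and $\chi_{11}$ for the self-conjugate $V_{11}$; again this is an equality of characters, so Lemma~\ref{huan_c} finishes it with no summation needed. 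Third, for the thirteen identities involving $V_3$ (the $5$-dimensional irreducible), I would for each pair $(3,i)$ compute the vector $\big(n_{3i}^k\big)_{k=1}^{11}$ by the formula above; a convenient bookkeeping device is to form the entrywise product of rows $\chi_3$ and $\chi_i$ weighted by the class sizes, and then pair the resulting class function against each $\chi_k$. As a consistency check at each stage I would verify that $\dim V_3 \cdot \dim V_i = \sum_k n_{3i}^k \dim V_k$, using the first column of the table for the dimensions; for instance $5 \cdot 16 = 80$ must match $9+9+10+10+5+5+2\cdot 16$ in the last line.

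There is no serious conceptual obstacle here: the statement is a finite computation entirely determined by Lemma~\ref{lemma_calculate} and the given character table. The only real work is arithmetic bookkeeping, and the main place an error could slip in is the $V_3 \otimes V_{11}$ line, where the multiplicity of $V_{11}$ is $2$ rather than $0$ or $1$, and the $V_3\otimes V_3$, $V_3\otimes V_7$, $V_3\otimes V_8$ lines, where one must be careful that certain expected summands (e.g.\ $V_9,V_{10}$ in $V_3\otimes V_7$) genuinely have multiplicity zero. I would therefore treat the dimension count and, where available, the symmetry $n_{ij}^k = n_{ik}^j$ (which holds because $V_i$ is self-dual, all $S_6$-representations being real) as independent cross-checks on every line.
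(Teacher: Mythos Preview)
Your proposal is correct and follows exactly the same approach as the paper: invoke Lemma~\ref{lemma_calculate} to compute each multiplicity $n_{ij}^k$ from the character table, dispatching $V_1$ and $V_2$ immediately and then working through the $V_3\otimes V_i$ cases one by one (the paper spells out only $V_3\otimes V_3$ and then writes ``similarly''). Your added dimension and self-duality cross-checks go beyond what the paper records but are entirely in the same spirit.
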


\begin{proof}
By lemma~\ref{lemma_calculate}, it's obviously that the direct sum decomposition of $V_1\otimes V_i$ and $V_2\otimes V_i$ can be held. Let \begin{align*}
    V_3\otimes V_3 &\cong n_{33}^1V_1 \oplus n_{33}^2V_2 \oplus n_{33}^3V_3 \oplus n_{33}^3V_3 \oplus n_{33}^4V_4 \oplus n_{33}^5V_5 \oplus n_{33}^6V_6 \\
    &\oplus n_{33}^ 7V_7 \oplus n_{33}^8V_8 \oplus n_{33}^9V_9 \oplus n_{33}^{10}V_{10} \oplus n_{33}^{11}V_{11},
    \end{align*} then we have $n_{33}^1=n_{33}^3=n_{33}^5=n_{33}^7=1,n_{ 33}^2=n_{33}^4=n_{33}^6=n_{33}^8=n_{33}^9=n_{33}^{10}=n_{33}^{11}=0$. It implies $V_3\otimes V_3\cong V_1\oplus V_3\oplus V_5\oplus V_7$, and similarly
    \begin{align*}
&V_3\otimes V_4\cong V_4\otimes V_3\cong V_2\oplus V_4\oplus V_6\oplus V_8\\
&V_3\otimes V_5\cong V_5\otimes V_3\cong V_3\oplus V_5\oplus V_7\oplus V_9\oplus V_{11}\\
&V_3\otimes V_6\cong V_6\otimes V_3\cong V_4\oplus V_6\oplus V_8\oplus V_{10}\oplus V_{11}\\
&V_3\otimes V_7\cong V_7\otimes V_3\cong V_3\oplus V_5\oplus V_7\oplus V_8\oplus V_{11}\\
&V_3\otimes V_8\cong V_8\otimes V_3\cong V_4\oplus V_6\oplus V_7\oplus V_8\oplus V_{11}\\
&V_3\otimes V_9\cong V_9\otimes V_3\cong V_5\oplus V_{11}\\
&V_3\otimes V_{10}\cong V_{10}\otimes V_3\cong V_6\oplus V_{11}\\
&V_3\otimes V_{11}\cong V_{11}\otimes V_3\cong V_5\oplus V_6\oplus V_7\oplus V_8\oplus V_9\oplus V_{10}\oplus 2V_{11}
\end{align*}
\end{proof}

According to the idea of theorem~\ref{direct_composition}, for any $i\in \{1,2,\cdots,11\}$, we can further obtain the direct sum decomposition for the remaining tensor products $V_5\otimes V_i,V_6\otimes V_i,\cdots,V_{11}\otimes V_i$ (see Appendix I for more details).

Then we will give the power formula of the irreducible representation of $S_6$ for $V_3,V_4,\cdots,V_{11}$.

\begin{theorem}\label{formula}
    The power formula of the irreducible representation of $S_6$ for $V_3$ is as follows
    $$
    V_3^{\otimes n}= \oplus_{i=1}^{11}{ \frac{c_{i1}\cdot(-1)^n+c_{i2}\cdot 2^n+c_{i3}\cdot 3^n+c_{i4}\cdot 5^n+c_{i5}}{c_{i6}}V_i},
    $$
    where $c_{ij}$ is the $(i,j)$ position in the following table that corresponds to $V_i$. For example, $c_{23}$=-15, and the specific coefficients can be found in the following table.
    
\begin{longtable}{|c|c|c|c|c|c|c|}
\toprule
\diagbox{Bas.}{Coe.} & $c_{i1}$  & $c_{i2}$ & $c_{i3}$ & $c_{i4}$ & $c_{i5}$  & $c_{i6}$ \\
\hline
\endfirsthead
\toprule
\diagbox{Bas.}{Coe.} & $c_{i1}$  & $c_{i2}$ & $c_{i3}$ & $c_{i4}$ & $c_{i5}$  & $c_{i6}$ \\
\hline
\endhead
\hline
$V_1$    & $265$  & $40$  & $15$  & $1$   & $135$ & $720$ \\
\hline
$V_2$    & $-5$   & $40$  & $-15$ & $1$   & $-45$ & $720$ \\
\hline
$V_3$    & $-53$  & $16$  & $9$   & $1$   & $27$  & $144$ \\
\hline
$V_4$    & $1$    & $16$  & $-9$  & $1$   & $-9$  & $144$ \\
\hline
$V_5$    & $15$   & $0$   & $5$   & $1$   & $-5$  & $80$ \\
\hline
$V_6$    & $5$    & $0$   & $-5$  & $1$   & $15$  & $80$ \\
\hline
$V_7$    & $13$   & $4$   & $3$   & $1$   & $-9$  & $72$ \\
\hline
$V_8$    & $-5$   & $4$   & $-3$  & $1$   & $-9$  & $72$ \\
\hline
$V_9$    & $-11$  & $-8$  & $3$   & $1$   & $-9$  & $144$ \\
\hline
$V_{10}$ & $7$    & $-8$  & $-3$  & $1$   & $27$  & $144$ \\
\hline
$V_{11}$ & $-5$   & $-5$  & $0$   & $1$   & $0$   & $45$ \\
\bottomrule

\caption{Coefficients of the power formula of $V_3$} \label{co4}
\end{longtable}

For example, the coefficient of the term $V_4$ of $V_3^{\otimes n}$ is 
$$
\frac{(-1)^n+16\cdot 2^n-9\cdot 3^n+5^n-9}{144}.
$$

The power formula for $V_4$ is
    $$
    V_4^{\otimes n}=\oplus_{i=1}^{11}{ \frac{c_{i1}\cdot(-1)^n+c_{i2}\cdot 2^n+c_{i3}\cdot (-1)^n\cdot 3^n+c_{i4}\cdot 5^n+c_{i5} }{c_{i6}}V_i}.
    $$
    where $c_{ij}$ is found in the table below.
\begin{longtable}{|c|c|c|c|c|c|c|}
\toprule
\diagbox{Bas.}{Coe.} &$c_{i1}$  & $c_{i2}$ & $c_{i3}$ & $c_{i4}$ & $c_{i5}$  & $c_{i6}$ \\
\hline
\endfirsthead
\toprule
\diagbox{Bas.}{Coe.} & $c_{i1}$  & $c_{i2}$ & $c_{i3}$ & $c_{i4}$ & $c_{i5}$  & $c_{i6}$ \\
\hline
\endhead
\hline
$V_1$    & $220$  & $40$  & $15$  & $1$   & $180$ & $720$ \\
\hline
$V_2$    & $40$   & $40$  & $-15$ & $1$   & $-90$ & $720$ \\
\hline
$V_3$    & $-8$  & $16$  & $9$   & $1$   & $-18$  & $144$ \\
\hline
$V_4$    & $-44$    & $16$  & $-9$  & $1$   & $36$  & $144$ \\
\hline
$V_5$    & $0$   & $0$   & $5$   & $1$   & $10$  & $80$ \\
\hline
$V_6$    & $20$    & $0$   & $-5$  & $1$   & $0$  & $80$ \\
\hline
$V_7$    & $4$   & $4$   & $3$   & $1$   & $0$  & $72$ \\
\hline
$V_8$    & $4$   & $4$   & $-3$  & $1$   & $-18$  & $72$ \\
\hline
$V_9$    & $-20$  & $-8$  & $3$   & $1$   & $0$  & $144$ \\
\hline
$V_{10}$ & $16$    & $-8$  & $-3$  & $1$   & $18$  & $144$ \\
\hline
$V_{11}$ & $-5$   & $-5$  & $0$   & $1$   & $0$   & $45$ \\
\bottomrule
\caption{Coefficients of power formula of $V_4$}
\end{longtable}
The power formula for the irreducible representation $V_5$ is 
$$
    V_5^{\otimes n}= \oplus_{i=1}^{11}{ \frac{c_{i1}\cdot(-1)^n+c_{i2}\cdot 3^{(2n)}+c_{i3}\cdot 3^n+c_{i4}}{c_{i5}}V_i},
    $$
\begin{longtable}{|c|c|c|c|c|c|}
\toprule
\diagbox{Bas.}{Coe.} & $c_{i1}$  & $c_{i2}$ & $c_{i3}$ & $c_{i4}$  & $c_{i5}$ \\
\hline
\endfirsthead
\toprule
\diagbox{Bas.}{Coe.} & $c_{i1}$  & $c_{i2}$ & $c_{i3}$ & $c_{i4}$  & $c_{i5}$ \\
\hline
\endhead
\hline
$V_1$    & $234$  & $1$  & $30$  & $135$    & $720$ \\
\hline
$V_2$    & $54$   & $1$  & $-30$ & $135$    & $720$ \\
\hline
$V_3$    & $18$  & $1$  & $6$   & $-9$     & $144$ \\
\hline
$V_4$    & $-18$    & $1$  & $-6$  & $-9$    & $144$ \\
\hline
$V_5$    & $-26$   & $1$   & $10$   & $15$   & $80$ \\
\hline
$V_6$    & $-6$    & $1$   & $-10$  & $15$     & $80$ \\
\hline
$V_7$    & $0$   & $1$   & $0$   & $-9$     & $72$ \\
\hline
$V_8$    & $0$   & $1$   & $0$  & $-9$     & $72$ \\
\hline
$V_9$    & $-18$  & $1$  & $-6$   & $-9$    & $144$ \\
\hline
$V_{10}$ & $18$    & $1$  & $6$  & $-9$     & $144$ \\
\hline
$V_{11}$ & $9$   & $1$  & $0$   & $0$     & $45$ \\
\hline
\caption{Table of coefficients of power formula of $V_5$}
\end{longtable}
The power formula for the irreducible representation $V_6$ is 
$$
    V_6^{\otimes n}= \oplus_{i=1}^{11}{ \frac{c_{i1}\cdot(-1)^n+c_{i2}\cdot 3^{(2n)}+c_{i3}\cdot (-1)^n\cdot 3^n+c_{i4}}{c_{i5}}V_i},
    $$

\begin{longtable}{|c|c|c|c|c|c|}
\toprule
\diagbox{Bas.}{Coe.} & $c_{i1}$  & $c_{i2}$ & $c_{i3}$ & $c_{i4}$  & $c_{i5}$ \\
\hline
\endfirsthead
\toprule
\diagbox{Bas.}{Coe.} & $c_{i1}$  & $c_{i2}$ & $c_{i3}$ & $c_{i4}$  & $c_{i5}$ \\
\hline
\endhead
\hline
$V_1$    & $144$  & $1$  & $30$  & $225$    & $720$ \\
\hline
$V_2$    & $144$   & $1$  & $-30$ & $45$    & $720$ \\
\hline
$V_3$    & $0$  & $1$  & $6$   & $9$     & $144$ \\
\hline
$V_4$    & $0$    & $1$  & $-6$  & $-27$    & $144$ \\
\hline
$V_5$    & $-16$   & $1$   & $10$   & $5$   & $80$ \\
\hline
$V_6$    & $-16$    & $1$   & $-10$  & $25$     & $80$ \\
\hline
$V_7$    & $0$   & $1$   & $0$   & $-9$     & $72$ \\
\hline
$V_8$    & $0$   & $1$   & $0$  & $-9$     & $72$ \\
\hline
$V_9$    & $0$  & $1$  & $-6$   & $-27$    & $144$ \\
\hline
$V_{10}$ & $0$    & $1$  & $6$  & $9$     & $144$ \\
\hline
$V_{11}$ & $9$   & $1$  & $0$   & $0$     & $45$ \\
\bottomrule
\caption{Coefficients of power formula of $V_6$}
\end{longtable}
The power formula for the irreducible representation $V_7$ is
$$
    V_7^{\otimes n}= \oplus_{i=1}^{11}{ \frac{c_{i1}\cdot(-1)^n+c_{i2}\cdot (-1)^n\cdot 2^n+c_{i3}\cdot 2^n+c_{i4}\cdot 2^n\cdot 5^n+c_{i5}}{c_{i6}}V_i},
    $$
\begin{longtable}{|c|c|c|c|c|c|c|}
\toprule
\diagbox{Bas.}{Coe.} & $c_{i1}$  & $c_{i2}$ & $c_{i3}$ & $c_{i4}$ & $c_{i5}$  & $c_{i6}$  \\
\hline
\endfirsthead
\toprule
\diagbox{Bas.}{Coe.} & $c_{i1}$  & $c_{i2}$ & $c_{i3}$ & $c_{i4}$ & $c_{i5}$  & $c_{i6}$  \\
\hline
\endhead
\hline
$V_1$    & $120$  & $60$  & $15$  & $1$   & $200$ & $720$ \\
\hline
$V_2$    & $-120$   & $30$  & $-15$ & $1$   & $-40$ & $720$ \\
\hline
$V_3$    & $0$  & $6$  & $9$   & $1$   & $-16$  & $144$ \\
\hline
$V_4$    & $0$    & $12$  & $-9$  & $1$   & $32$  & $144$ \\
\hline
$V_5$    & $0$   & $10$   & $5$   & $1$   & $0$  & $80$ \\
\hline
$V_6$    & $0$    & $0$   & $-5$  & $1$   & $0$  & $80$ \\
\hline
$V_7$    & $-12$   & $-12$   & $3$   & $1$   & $20$  & $72$ \\
\hline
$V_8$    & $12$   & $-6$   & $-3$  & $1$   & $-4$  & $72$ \\
\hline
$V_9$    & $24$  & $0$  & $3$   & $1$   & $8$  & $144$ \\
\hline
$V_{10}$ & $-24$    & $18$  & $-3$  & $1$   & $8$  & $144$ \\
\hline
$V_{11}$ & $0$   & $0$  & $0$   & $1$   & $-10$   & $45$ \\
\bottomrule
\caption{Coefficients of power formula of $V_7$}
\end{longtable}
The power formula for the irreducible representation $V_8$ is
$$
    V_8^{\otimes n}= \oplus_{i=1}^{11}{ \frac{c_{i1}\cdot(-1)^n+c_{i2}\cdot (-1)^n\cdot 2^n+c_{i3}\cdot 2^n+c_{i4}\cdot 2^n\cdot 5^n+c_{i5}}{c_{i6}}V_i},
    $$

\begin{longtable}{|c|c|c|c|c|c|c|}
\toprule
\diagbox{Bas.}{Coe.} & $c_{i1}$  & $c_{i2}$ & $c_{i3}$ & $c_{i4}$ & $c_{i5}$  & $c_{i6}$  \\
\hline
\endfirsthead
\toprule
\diagbox{Bas.}{Coe.} & $c_{i1}$  & $c_{i2}$ & $c_{i3}$ & $c_{i4}$ & $c_{i5}$  & $c_{i6}$  \\
\hline
\endhead
\hline
$V_1$    & $120$  & $60$  & $15$  & $1$   & $200$ & $720$ \\
\hline
$V_2$    & $-120$   & $30$  & $-15$ & $1$   & $-40$ & $720$ \\
\hline
$V_3$    & $-24$  & $18$  & $-3$   & $1$   & $8$  & $144$ \\
\hline
$V_4$    & $24$    & $0$  & $3$  & $1$   & $8$  & $144$ \\
\hline
$V_5$    & $0$   & $10$   & $5$   & $1$   & $0$  & $80$ \\
\hline
$V_6$    & $0$    & $0$   & $-5$  & $1$   & $0$  & $80$ \\
\hline
$V_7$    & $12$   & $-6$   & $-3$   & $1$   & $-4$  & $72$ \\
\hline
$V_8$    & $-12$   & $-12$   & $3$  & $1$   & $20$  & $72$ \\
\hline
$V_9$    & $0$  & $12$  & $-9$   & $1$   & $32$  & $144$ \\
\hline
$V_{10}$ & $0$    & $6$  & $9$  & $1$   & $-16$  & $144$ \\
\hline
$V_{11}$ & $0$   & $0$  & $0$   & $1$   & $-10$   & $45$ \\
\bottomrule
\caption{Coefficients of power formula of $V_8$}
\end{longtable}
The power formula for the irreducible representation $V_9$ is 
$$
    V_9^{\otimes n}=\oplus_{i=1}^{11}{ \frac{c_{i1}\cdot(-1)^n+c_{i2}\cdot 2^n+c_{i3}\cdot (-1)^n\cdot 3^n+c_{i4}\cdot 5^n+c_{i5}}{c_{i6}}V_i},
    $$
\begin{longtable}{|c|c|c|c|c|c|c|}
\toprule
\diagbox{Bas.}{Coe.} & $c_{i1}$  & $c_{i2}$ & $c_{i3}$ & $c_{i4}$ & $c_{i5}$  & $c_{i6}$  \\
\hline
\endfirsthead
\toprule
\diagbox{Bas.}{Coe.} & $c_{i1}$  & $c_{i2}$ & $c_{i3}$ & $c_{i4}$ & $c_{i5}$  & $c_{i6}$  \\
\hline
\endhead
\hline
$V_1$    & $220$  & $40$  & $15$  & $1$   & $180$ & $720$ \\
\hline
$V_2$    & $40$   & $40$  & $-15$ & $1$   & $-90$ & $720$ \\
\hline
$V_3$    & $16$  & $-8$  & $-3$   & $1$   & $18$  & $144$ \\
\hline
$V_4$    & $-20$    & $-8$  & $3$  & $1$   & $0$  & $144$ \\
\hline
$V_5$    & $0$   & $0$   & $5$   & $1$   & $10$  & $80$ \\
\hline
$V_6$    & $20$    & $0$   & $-5$  & $1$   & $0$  & $80$ \\
\hline
$V_7$    & $4$   & $4$   & $-3$   & $1$   & $-18$  & $72$ \\
\hline
$V_8$    & $4$   & $4$   & $3$  & $1$   & $0$  & $72$ \\
\hline
$V_9$    & $-44$  & $16$  & $-9$   & $1$   & $36$  & $144$ \\
\hline
$V_{10}$ & $-8$    & $16$  & $9$  & $1$   & $-18$  & $144$ \\
\hline
$V_{11}$ & $-5$   & $-5$  & $0$   & $1$   & $0$   & $45$ \\
\bottomrule
\caption{Coefficients of power formula of $V_9$}
\end{longtable}
The power formula for the irreducible representation $V_{10}$ is
$$
    V_{10}^{\otimes n}= \oplus_{i=1}^{11}{ \frac{c_{i1}\cdot(-1)^n+c_{i2}\cdot 2^n+c_{i3}\cdot 3^n+c_{i4}\cdot 5^n+c_{i5}}{c_{i6}}V_i},
    $$

\begin{longtable}{|c|c|c|c|c|c|c|}
\toprule
\diagbox{Bas.}{Coe.} & $c_{i1}$  & $c_{i2}$ & $c_{i3}$ & $c_{i4}$ & $c_{i5}$  & $c_{i6}$  \\
\hline
\endfirsthead
\toprule
\diagbox{Bas.}{Coe.} & $c_{i1}$  & $c_{i2}$ & $c_{i3}$ & $c_{i4}$ & $c_{i5}$  & $c_{i6}$  \\
\hline
\endhead
\hline
$V_1$    & $265$  & $40$  & $15$  & $1$   & $135$ & $720$ \\
\hline
$V_2$    & $-5$   & $40$  & $-15$ & $1$   & $-45$ & $720$ \\
\hline
$V_3$    & $7$  & $-8$  & $-3$   & $1$   & $27$  & $144$ \\
\hline
$V_4$    & $-11$    & $-8$  & $3$  & $1$   & $-9$  & $144$ \\
\hline
$V_5$    & $15$   & $0$   & $5$   & $1$   & $-5$  & $80$ \\
\hline
$V_6$    & $5$    & $0$   & $-5$  & $1$   & $15$  & $80$ \\
\hline
$V_7$    & $-5$   & $4$   & $-3$   & $1$   & $-9$  & $72$ \\
\hline
$V_8$    & $13$   & $4$   & $3$  & $1$   & $-9$  & $72$ \\
\hline
$V_9$    & $1$  & $16$  & $-9$   & $1$   & $-9$  & $144$ \\
\hline
$V_{10}$ & $-53$    & $16$  & $9$  & $1$   & $27$  & $144$ \\
\hline
$V_{11}$ & $-5$   & $-5$  & $0$   & $1$   & $0$   & $45$ \\
\bottomrule
\caption{Coefficients of power formula of $V_{10}$}
\end{longtable}
The power formula for the irreducible representation $V_{11}$ is 
$$
    V_{11}^{\otimes n}= \oplus_{i=1}^{11}{ \frac{c_{i1}\cdot 2^{(4n)}+c_{i2}\cdot (-1)^n\cdot 2^n+c_{i3}}{c_{i4}}V_i},
    $$
\begin{longtable}{|c|c|c|c|c|}
\toprule
\diagbox{Bas.}{Coe.} & $c_{i1}$  & $c_{i2}$ & $c_{i3}$  & $c_{i4}$ \\
\hline
\endfirsthead
\toprule
\diagbox{Bas.}{Coe.} & $c_{i1}$  & $c_{i2}$ & $c_{i3}$  & $c_{i4}$ \\
\hline
\endhead
\hline
$V_1$    & $1$  & $80$  & $144$   & $720$ \\
\hline
$V_2$    & $1$   & $80$  & $144$  & $720$ \\
\hline
$V_3$    & $1$  & $8$  & $0$    & $144$ \\
\hline
$V_4$    & $1$    & $8$  & $0$  & $144$ \\
\hline
$V_5$    & $1$   & $0$   & $-16$   & $80$ \\
\hline
$V_6$    & $1$    & $0$   & $-16$ & $80$ \\
\hline
$V_7$    & $1$   & $8$   & $0$     & $72$ \\
\hline
$V_8$    & $1$   & $8$   & $0$   & $72$ \\
\hline
$V_9$    & $1$  & $8$  & $0$   & $144$ \\
\hline
$V_{10}$ & $1$    & $8$  & $0$    & $144$ \\
\hline
$V_{11}$ & $1$   & $-10$  & $9$   & $45$ \\
\bottomrule
\caption{Coefficients of power formula of $V_{11}$}
\end{longtable}
\end{theorem}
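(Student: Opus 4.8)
The plan is to deduce all the power formulas from a single observation: the character of the $n$-th tensor power $V_j^{\otimes n}$ is the pointwise $n$-th power $\chi_j^{\,n}$ of the character $\chi_j$, and every irreducible character of $S_6$ is real-valued, so Lemma~\ref{huan_d} gives that the multiplicity of $V_i$ in $V_j^{\otimes n}$ is
\[
n_i^{(n)}=(\chi_i,\chi_j^{\,n})=\frac{1}{720}\sum_{t=1}^{11}|\mathcal{C}_t|\,\chi_j(g_t)^{\,n}\,\chi_i(g_t).
\]
This is exactly the matrix statement announced in the introduction: writing $A$ for the matrix of multiplication by $[V_j]$ on $\mathcal{R}(S_6)$ in the basis $\{[V_i]\}$ — which is read off from Theorem~\ref{direct_composition} (and from Appendix~I when $j\geq 4$) — the coefficient vector of $V_j^{\otimes n}$ is $A^{\,n-1}e_j$, and $A$ is diagonalised by the character table with spectrum $\{\chi_j(g_1),\dots,\chi_j(g_{11})\}$. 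First I would record that $A$ is indeed diagonalisable despite the repeated eigenvalues, since $\mathbb{C}[S_6]$ is semisimple and hence $\mathcal{R}(S_6)\otimes_{\mathbb Z}\mathbb{C}\cong\mathbb{C}^{11}$ as an algebra, on which every multiplication operator is semisimple.

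Next I would carry out the grouping of conjugacy classes by the value of $\chi_j$. For $j=3$ the third row of the character table takes the nonzero values $5$ (at $g_1$), $3$ (at $g_2$), $2$ (at $g_8$), $1$ (at $g_5,g_7$), $-1$ (at $g_4,g_6,g_{10},g_{11}$), and the value $0$ (at $g_3,g_9$); collecting the sum above by these values writes $n_i^{(n)}$ as a rational linear combination of $5^{\,n},3^{\,n},2^{\,n},1^{\,n},(-1)^{\,n}$, the $0$-eigenvalue contributing nothing for every $n\geq 1$ — and this is the one bookkeeping point to get right, namely that the zero value must be discarded so that the closed form is valid already at $n=1$ and not merely for $n\geq 2$. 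The coefficient of a value $v$ is then $\frac{1}{720}\sum_{t:\chi_3(g_t)=v}|\mathcal{C}_t|\chi_i(g_t)$; because each $|\mathcal{C}_t|\chi_i(g_t)/\dim V_i$ is an algebraic integer (it is the scalar by which the class sum $\widehat{\mathcal{C}_t}$ acts on $V_i$) and the partial sums are rational, dividing out $\dim V_i$ turns these coefficients into integers, which is precisely why the common denominator in Table~\ref{co4} is $c_{i6}=720/\dim V_i$ and the entries $c_{i1},\dots,c_{i5}$ are integers; e.g. the coefficient of $5^{\,n}$ is $\frac{1}{720}\chi_i(g_1)=\frac{\dim V_i}{720}$, which after clearing denominators gives the all-ones column $c_{i4}$. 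Filling in the grouped partial sums for $i=1,\dots,11$ produces Table~\ref{co4}.

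The remaining nine formulas are the same computation with $\chi_4,\dots,\chi_{11}$ in place of $\chi_3$; only the value set of the chosen row changes, and that alone dictates which exponentials occur. I would note: $\chi_4$ takes $\{5,-3,2,1,-1,0\}$, giving the term $(-3)^n=(-1)^n3^n$; $\chi_5$ takes $\{9,3,1,-1,0\}$, giving $9^n=3^{2n}$ and $3^n$; $\chi_6$ takes $\{9,-3,1,-1,0\}$; $\chi_7$ and $\chi_8$ both take $\{10,2,1,-1,-2,0\}$, giving $10^n=2^n5^n$, $2^n$ and $(-2)^n=(-1)^n2^n$; $\chi_9$ takes $\{5,-3,2,1,-1,0\}$ and $\chi_{10}$ takes $\{5,3,2,1,-1,0\}$; and $\chi_{11}$ takes just $\{16,1,0,-2\}$, giving $16^n=2^{4n}$ and $(-2)^n=(-1)^n2^n$. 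In every case the coefficient table is obtained by the same group-then-divide-by-$\dim V_i$ routine, and as a final check I would verify the base case $n=1$, i.e. that for each $i$ the rational combination of exponentials evaluated at $n=1$ equals $\delta_{ij}$ (equivalently, that the grouped partial sums reconstruct the row $\chi_j$).

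I do not expect a conceptual obstacle: the argument is a finite, if bulky, verification. The genuinely non-mechanical ingredients are the two remarks above — diagonalisability of $A$ (and its eight analogues) despite repeated eigenvalues, handled by semisimplicity, and the discarding of the zero eigenvalue so that each stated formula is valid for all $n\geq 1$ — while the remaining labour, namely producing the eight coefficient tables, is routine once the partition of conjugacy classes by character value is fixed.
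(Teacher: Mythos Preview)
Your proposal is correct and follows a genuinely different, more conceptual route than the paper. The paper builds the multiplication-by-$[V_3]$ matrix $A$ explicitly from the tensor decompositions of Theorem~\ref{direct_composition}, computes its characteristic polynomial to find the eigenvalues $\{0,0,1,1,-1,-1,-1,-1,2,3,5\}$, produces a diagonalising matrix $P$ and its inverse numerically, and then evaluates $T_n=P\Lambda^{n-1}P^{-1}T_1$; the same brute-force diagonalisation is repeated for each of $V_4,\dots,V_{11}$. You bypass this linear algebra entirely by observing that the character table \emph{is} the simultaneous diagonalisation: evaluation at each conjugacy class $g_t$ gives a ring homomorphism $\mathcal{R}(S_6)\to\mathbb{Z}$, so the spectrum of $A$ is just the row $\{\chi_j(g_t)\}_t$, and the multiplicity formula $n_i^{(n)}=\tfrac{1}{720}\sum_t|\mathcal{C}_t|\,\chi_j(g_t)^n\,\chi_i(g_t)$ is the already-diagonalised answer. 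Your approach explains structural features of the tables that are opaque in the paper's treatment --- why $c_{i,\text{last}}=720/\dim V_i$ in every table, why the $5^n$ (or $9^n$, $10^n$, $16^n$) column is all ones, and why the exponential bases are exactly the distinct nonzero values in row $j$ of the character table --- and it treats all nine cases uniformly with no matrix inversions. The paper's approach, on the other hand, exhibits the recurrence $T_n=AT_{n-1}$ advertised in the introduction and checks diagonalisability by explicit eigenvector computation rather than by invoking semisimplicity of $\mathcal{R}(S_6)\otimes\mathbb{C}$.
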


\begin{proof}
We show the case $V_3$ as an example. Let us continue with
    $$
V_3^{\otimes n} \cong a_n V_1 \oplus b_n V_2 \oplus c_n V_3 \oplus d_n V_4 \oplus e_n V_5 \oplus f_n V_6 \oplus g_n V_7 \oplus h_n V_8 \oplus i_n V_9 \oplus j_n V_{10} \oplus k_n V_{11}
    $$
    Then
    \begin{align*}
V_3^{\otimes n} &\cong V_3^{\otimes (n-1)}\otimes V_3\\
&\cong  a_{n-1} (V_1\otimes V_3) \oplus b_{n-1} (V_2\otimes V_3) \oplus c_{n-1} (V_3\otimes V_3) \oplus d_{n-1} (V_4\otimes V_3)\\
 &\oplus e_{n-1} (V_5\otimes V_3) \oplus f_{n-1} (V_6\otimes V_3) \oplus g_{n-1} (V_7\otimes V_3) \oplus h_{n-1} (V_8\otimes V_3)\\
  &\oplus i_{n-1} (V_9\otimes V_3) \oplus j_{n-1} (V_{10}\otimes V_3) \oplus k_{n-1} (V_{11}\otimes V_3)\\
  &\cong a_{n-1} V_3 \oplus b_{n-1} V_4 \oplus c_{n-1} (V_1\oplus V_3\oplus V_5\oplus V_7) \oplus d_{n-1} (V_2\oplus V_4\oplus V_6\oplus V_8)\\
  &\oplus e_{n-1} (V_3\oplus V_5\oplus V_7\oplus V_9\oplus V_{11})\oplus f_{n-1} (V_4\oplus V_6\oplus V_8\oplus V_{10}\oplus V_{11}) \\
  &\oplus g_{n-1} (V_3\oplus V_5\oplus V_7\oplus V_8\oplus V_{11}) \oplus h_{n-1} (V_4\oplus V_6\oplus V_7\oplus V_8\oplus V_{11})\\
  &\oplus i_{n-1} (V_5\oplus V_{11}) \oplus j_{n-1} (V_6\oplus V_{11}) \oplus k_{n-1} (V_5\oplus V_6\oplus V_7\oplus V_8\oplus V_9\oplus V_{10}\oplus 2V_{11})
    \end{align*}
    These terms can be rewritten:
\[
\left\{
    \begin{aligned}
        & a_n=c_{n-1}\\
        & b_n=d_{n-1}\\
        & c_n=a_{n-1}+c_{n-1}+e_{n-1}+g_{n-1}\\
        & d_n=b_{n-1}+d_{n-1}+f_{n-1}+h_{n-1}\\
        & e_n=c_{n-1}+e_{n-1}+g_{n-1}+i_{n-1}+k_{n-1}\\
        & f_n=d_{n-1}+f_{n-1}+h_{n-1}+j_{n-1}+k_{n-1}\\
        & g_n=c_{n-1}+e_{n-1}+g_{n-1}+h_{n-1}+k_{n-1}\\
        & h_n=d_{n-1}+f_{n-1}+g_{n-1}+h_{n-1}+k_{n-1}\\
        & i_n=e_{n-1}+k_{n-1}\\
        & j_n=f_{n-1}+k_{n-1}\\
        & k_n=e_{n-1}+f_{n-1}+g_{n-1}+h_{n-1}+i_{n-1}+j_{n-1}+2k_{n-1}
    \end{aligned}
\right.
\]
In other words, we have $T_n = AT_{n-1}$, where 
\begin{align*}
    T_n = \begin{pmatrix}
a_n \\ b_n \\ c_n \\ d_n \\e_n \\ f_n \\\ g_n \\ h_n \\ i_n \\ j_n \\ k_n
\end{pmatrix},\quad A=\begin{pmatrix}
0 & 0 & 1 & 0 & 0 & 0 & 0 & 0 & 0 & 0 \quad 0 \\
0 & 0 & 0 & 1 & 0 & 0 & 0 & 0 & 0 & 0 \quad 0 \\
1 & 0 & 1 & 0 & 1 & 0 & 1 & 0 & 0 & 0 \quad 0 \\
0 & 1 & 0 & 1 & 0 & 1 & 0 & 1 & 0 & 0 \quad 0 \\
0 & 0 & 1 & 0 & 1 & 0 & 1 & 0 & 1 & 0 \quad 1 \\
0 & 0 & 0 & 1 & 0 & 1 & 0 & 1 & 0 & 1 \quad 1 \\
0 & 0 & 1 & 0 & 1 & 0 & 1 & 1 & 0 & 0 \quad 1 \\
0 & 0 & 0 & 1 & 0 & 1 & 1 & 1 & 0 & 0 \quad 1 \\
0 & 0 & 0 & 0 & 1 & 0 & 0 & 0 & 0 & 0 \quad 1 \\
0 & 0 & 0 & 0 & 0 & 1 & 0 & 0 & 0 & 0 \quad 1 \\
0 & 0 & 0 & 0 & 1 & 1 & 1 & 1 & 1 & 1 \quad 2
\end{pmatrix}, \quad T_{n-1}=\begin{pmatrix}
a_{n-1} \\b_{n-1} \\c_{n-1}\\d_{n-1}\\e_{n-1}\\f_{n-1}\\g_{n-1}\\h_{n-1}\\i_{n-1}\\j_{n-1}\\ k_{n-1}
\end{pmatrix}.
\end{align*}

By iteration we get $T_n = A^{n-1}T_1$, so the general formula for finding $a_n$ to $k_n$ is transformed into a problem of finding the $(n-1)$-th power of $A$.
Note that the characteristic matrix of $A$ is
\begin{center}
\renewcommand{\arraystretch}{1.5} 
$\left(
\begin{tabular}{*{11}{c}}
$-\lambda$ & 0 & 1 & 0 & 0 & 0 & 0 & 0 & 0 & 0 & 0 \\
0 & $-\lambda$ & 0 & 1 & 0 & 0 & 0 & 0 & 0 & 0 & 0 \\
1 & 0 & $1-\lambda$ & 0 & 1 & 0 & 1 & 0 & 0 & 0 & 0 \\
0 & 1 & 0 & $1-\lambda$ & 0 & 1 & 0 & 1 & 0 & 0 & 0 \\
0 & 0 & 1 & 0 & $1-\lambda$ & 0 & 1 & 0 & 1 & 0 & 1 \\
0 & 0 & 0 & 1 & 0 & $1-\lambda$ & 0 & 1 & 0 & 1 & 1 \\
0 & 0 & 1 & 0 & 1 & 0 & $1-\lambda$ & 1 & 0 & 0 & 1 \\
0 & 0 & 0 & 1 & 0 & 1 & 1 & $1-\lambda$ & 0 & 0 & 1 \\
0 & 0 & 0 & 0 & 1 & 0 & 0 & 0 & $-\lambda$ & 0 & 1 \\
0 & 0 & 0 & 0 & 0 & 1 & 0 & 0 & 0 & $-\lambda$ & 1 \\
0 & 0 & 0 & 0 & 1 & 1 & 1 & 1 & 1 & 1 & $2-\lambda$\\
\end{tabular}
\right)$
\end{center}
Thus, all the eigenvalues of $A$ can be directly computed: $\lambda_1=0$, $\lambda_2=0$, $\lambda_3=1$, $\lambda_4=1$, $\lambda_5=-1$, $\lambda_6=-1$, $\lambda_7=-1$, $\lambda_8= -1$, $\lambda_9=2$, $\lambda_{10}=3$, $\lambda_{11}=5$. 

Since the matrix $A$ is diagonal, it is similarly diagonalisable. Then there exists an invertible matrix $P$ such that $P^{-1}AP=\Lambda$ is diagonal. The matrices $\Lambda$ and $P$ have the form

$$
\Lambda =\begin{pmatrix}
0 & 0 & 0 & 0 & 0 & 0 & 0 & 0 & 0 & 0 \quad 0 \\
0 & 0 & 0 & 0 & 0 & 0 & 0 & 0 & 0 & 0 \quad 0 \\
0 & 0 & 1 & 0 & 0 & 0 & 0 & 0 & 0 & 0 \quad 0 \\
0 & 0 & 0 & 1 & 0 & 0 & 0 & 0 & 0 & 0 \quad 0 \\
0 & 0 & 0 & 0 & -1 & 0 & 0 & 0 & 0 & 0 \quad 0 \\
0 & 0 & 0 & 0 & 0 & -1 & 0 & 0 & 0 & 0 \quad 0 \\
0 & 0 & 0 & 0 & 0 & 0 & -1 & 0 & 0 & 0 \quad 0 \\
0 & 0 & 0 & 0 & 0 & 0 & 0 & -1 & 0 & 0 \quad 0 \\
0 & 0 & 0 & 0 & 0 & 0 & 0 & 0 & 2 & 0 \quad 0 \\
0 & 0 & 0 & 0 & 0 & 0 & 0 & 0 & 0 & 3 \quad 0 \\
0 & 0 & 0 & 0 & 0 & 0 & 0 & 0 & 0 & 0  \quad 5
\end{pmatrix}
$$
\begin{center}
\renewcommand{\arraystretch}{1.5} 
$P=\frac{1}{16} \left(
\begin{tabular}{*{11}{c}}
$-16$ & 16 & 0 & 16 & $-16$ & $-16$ & 0 & $-32$& $-8$ & $-16$& 1 \\
16 & 16 & 16 & 0 & 16 & 0 & $-16$ & $-16$ & $-8$ & 16& 1 \\
0 & 0 & 0 & 16 & 16 & 16 & 0 & 32 & $-16$ & $-48$ & 5 \\
0 & 0 & 16 & 0 & $-16$ & 0 & 16 & 16 & $-16$ & 48 & 5 \\
0 & $-16$ & 16 & 0 & 0 & $-16$ & 0 & $-16$ & 0 & $-48$ & 9 \\
0 & $-16$ & 0 & 16 & 0 & 0 & $-16$ & $-16$ & 0 & 48 & 9 \\
16 & 0 & $-16$ & $-16$ & $-16$ & 0 & 0 & $-16$ & $-8$ & $-32$ & 10 \\
16 & 0 & $-16$ & $-16$ & 16 & 0 & 0 & 0 & $-8$ & 32 & 10 \\
$-16$ & 0 & 16 & 0 & 0 & 16 & 0 & 0 & 8 & $-16$ & 5 \\
16 & 0 & 0 & 16 & 0 & 0 & 16 & 0 & 8 & 16 & 5 \\
0 & 16 & 0 & 0 & 0 & 0 & 0 & 16 & 16 & 0 & 16 \\
\end{tabular}
\right)$,
\end{center}

By calculation we get $P^{-1}$.
\begin{center}
\renewcommand{\arraystretch}{1.5} 
$ \frac{1}{720}\left(
\begin{tabular}{*{11}{c}}
$-120$ & 120 & 0 & 0 & 0 & 0 & 120 & $-120$ & $-120$ & 120 & 0 \\
144 & 144 & 0 & 0 & $-144$ & $-144$ & 0 & 0 & 0 & 0 & 144 \\
$-45$ & 135 & $-45$ & 135 & 135 & $-45$ & $-90$ & $-90$ & 135 & $-45$ & 0 \\
135 & $-45$ & 135 & $-45$ & $-45$ & 135 & $-90$ & $-90$ & $-45$ & 135 & 0 \\
$-50$ & 130 & 50 & $-130$ & 90 & $-90$ & $-140$ & 220 & $-10$ & 170 & $-80$ \\
$-55$ & 35 & 55 & $-35$ & $-225$ & 45 & 170 & $-10$ & 385 & 115 & $-160$ \\
35 & $-55$ & $-35$ & 55 & 45 & $-225$ & $-10$ & 170 & 115 & 385 & $-160$ \\
$-80$ & $-80$ & 80 & 80 & 0 & 0 & $-80$ & $-80$ & $-160$ & $-160$ & 160 \\
$-80$ & $-80$ & $-160$ & $-160$ & 0 & 0 & $-80$ & $-80$ & 80 & 80 & 160 \\
$-15$ & 15 & $-45$ & 45 & $-45$ & 45 & $-30$ & 30 & $-15$ & 15 & 0 \\
16 & 16 & 80 & 80 & 144 & 144 & 160 & 160 & 80 & 80 & 256 \\
\end{tabular}
\right)$,
\end{center}

Let $A^{n-1}=(a_{ij})_{n\times n}$, then we have $A^{n-1}=P\Lambda^{n-1}P^{-1}$. To get $T_n$. Notice that 
$$
T_1=\begin{pmatrix}
0 & 0& 1&0 &0 &0&0 &0 &0&0 \quad0
\end{pmatrix}^T.
$$
From the equation $T_n = A^{n-1}T_1$ we can obtain the coefficients (see Table~\ref{co4}) we want. Similarly, we can also get the power formulas for the remaining irreducible representations $V_4,V_5,\cdots,V_{11}$.
\end{proof}


\section{Properties of the representation ring $\mathcal{R}(S_6)$}\label{4}
To understand the properties of the representation ring $\mathcal{R}(S_6)$, we define the maps $\Gamma:\mathcal{R}(S_6)\rightarrow \mathbb{Z}$:
$$[V_1]\mapsto 1, [V_2]\mapsto x_1, [V_3]\mapsto x_2,\cdots, [V_{11}]\mapsto x_{10}.$$

Then we can use the generators to express the decomposition of direct sums, i.e. we can rewrite $V_2\otimes V_2\cong V_1$ as $x_1^2-1=0$. Then we have the following theorem.

\begin{theorem}\label{shengcheng}
The representation ring $\mathcal{R}(S_6)$ can be expressed by the following generators:
\begin{align*}
\mathcal{R}(S_6)&\cong \mathbb Z[x_1,x_2,\cdots,x_{10}]/(y_1,y_2,\cdots,y_{55}),
\end{align*}
where $y_1,y_2,\cdots,y_{55}$ are generated by $x_1,x_2,\cdots,x_{10}$. The expressions of $y_i$ are listed in appendix II (\ref{fulub}).
\end{theorem}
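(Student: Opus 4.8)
The plan is to realize $\mathcal{R}(S_6)$ as an explicit quotient by writing down the obvious ring surjection from the polynomial ring and then controlling its kernel by a rank count over $\Z$. Define the ring homomorphism $\varphi\colon \Z[x_1,\dots,x_{10}]\to\mathcal{R}(S_6)$ by $x_i\mapsto[V_{i+1}]$ for $1\le i\le 10$; this is well defined because the polynomial ring is free, and it is surjective because its image is a subring containing $[V_1]=\varphi(1)$ and $[V_k]=\varphi(x_{k-1})$ for $k\ge 2$, hence all of the $\Z$-basis $\{[V_1],\dots,[V_{11}]\}$. The $55=\binom{11}{2}$ polynomials $y_m$ are, by construction, the pairwise products $x_ax_b$ with $1\le a\le b\le 10$, each with the $\Z$-linear combination of $1,x_1,\dots,x_{10}$ recording the irreducible decomposition of $V_{a+1}\otimes V_{b+1}$ subtracted off; since those decompositions are precisely the ones established in Theorem~\ref{direct_composition} and Appendix~I, we get $\varphi(y_m)=[V_{a+1}]\cdot[V_{b+1}]-[V_{a+1}\otimes V_{b+1}]=0$, so $(y_1,\dots,y_{55})\subseteq\ker\varphi$ and $\varphi$ descends to a surjection $\bar\varphi\colon R:=\Z[x_1,\dots,x_{10}]/(y_1,\dots,y_{55})\twoheadrightarrow\mathcal{R}(S_6)$.

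The crux is to check that $R$ is spanned as a $\Z$-module by $1,x_1,\dots,x_{10}$. Each of the $55$ relations rewrites a degree-two monomial $x_ax_b$ as a $\Z$-combination of $1,x_1,\dots,x_{10}$ --- this is possible exactly because the tensor product of two irreducible representations is again a sum of irreducibles --- so by induction on total degree every monomial of degree $\ge 2$ reduces, upon applying the relevant relation to two of its variable factors, to a $\Z$-combination of monomials of strictly smaller degree. Hence there is a surjection of $\Z$-modules $\pi\colon\Z^{11}\twoheadrightarrow R$ carrying the standard basis $e_0,\dots,e_{10}$ to $1,x_1,\dots,x_{10}$.

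Finally, the composite $\bar\varphi\circ\pi\colon\Z^{11}\to\mathcal{R}(S_6)$ sends $e_0,\dots,e_{10}$ to $[V_1],\dots,[V_{11}]$, which is a $\Z$-basis of $\mathcal{R}(S_6)$ by the definition of the representation ring, so $\bar\varphi\circ\pi$ is an isomorphism; a surjection $\pi$ whose composition with $\bar\varphi$ is injective must itself be injective, hence $\pi$ is an isomorphism and therefore so is $\bar\varphi=(\bar\varphi\circ\pi)\circ\pi^{-1}$. Thus $\ker\varphi=(y_1,\dots,y_{55})$ and the presentation follows. The one genuinely laborious ingredient --- as opposed to formal --- is the production and verification of the $55$ tensor-product decompositions defining the $y_m$, which is exactly the content of Theorem~\ref{direct_composition} and the computations collected in Appendix~I; granted those, the ring-theoretic argument above amounts to the remark that a $\Z$-module of rank at most $11$ that surjects onto a free $\Z$-module of rank $11$ must itself be free of rank $11$, with the two maps isomorphisms.
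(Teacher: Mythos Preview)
Your argument is correct and follows the paper's approach of reading off the 55 relations from the tensor-product decompositions computed in Theorem~\ref{direct_composition} and Appendix~I. In fact you go further than the paper, whose proof simply asserts that the relations can be written down; your rank-counting argument (the quotient is $\Z$-spanned by eleven elements and surjects onto the free rank-11 $\Z$-module $\mathcal{R}(S_6)$) is exactly what is needed to show that the 55 relations generate the \emph{entire} kernel, a point the paper leaves unaddressed.
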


\begin{proof}
    Just like the example we showed before, by writing $V_2\otimes V_2\cong V_1$ in the form of $x_1^2-1=0$, the whole relations are not hard to achieve.
\end{proof}

\begin{theorem}\label{group}
  The unit group of $\mathcal{R}(S_6)$, denoted as $U(\mathcal{R}(S_6))$, is isomorphic to Klein four-group.
\end{theorem}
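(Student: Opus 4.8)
The plan is to identify $U(\mathcal R(S_6))$ explicitly as the set $\{[V_1],-[V_1],[V_2],-[V_2]\}$ and then observe that this set is a Klein four-group. I would split the argument into two halves: first, that these four elements are units and form the claimed group; second, that there are no others. For the first half: $[V_1]$ is the multiplicative identity; by Theorem~\ref{direct_composition} we have $V_2\otimes V_2\cong V_1$, so $[V_2]$ is a unit with $[V_2]^2=[V_1]$; and since $\mathcal R(S_6)$ is a commutative unital ring, $-[V_1]$ and $-[V_2]$ are units as well. A direct check shows $\{[V_1],-[V_1],[V_2],-[V_2]\}$ is closed under multiplication and that each of its three non-identity elements squares to $[V_1]$, so it is a subgroup isomorphic to $\mathbb Z/2\mathbb Z\times\mathbb Z/2\mathbb Z$.

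For the second half — the real content — I would use the character homomorphisms. For each conjugacy class representative $g_t$ of $S_6$, the map $\varepsilon_t\colon\mathcal R(S_6)\to\mathbb Z$, $[V]\mapsto\chi_V(g_t)$, is a ring homomorphism: additivity is clear, multiplicativity is $\chi_{V\otimes W}=\chi_V\chi_W$, and the target is genuinely $\mathbb Z$ (not merely $\mathbb C$) because every irreducible character of $S_6$ is integer-valued, as is evident from the character table above. Hence, if $u\in\mathcal R(S_6)$ is a unit, then $\varepsilon_t(u)$ is a unit of $\mathbb Z$ for every $t$, so the associated virtual character $\chi_u$ satisfies $\chi_u(g)\in\{1,-1\}$ for all $g\in S_6$.

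Writing $u=\sum_{i=1}^{11}a_i[V_i]$ with $a_i\in\mathbb Z$, so that $\chi_u=\sum_{i=1}^{11}a_i\chi_i$, I would then compute the norm of $\chi_u$ in two ways: orthonormality of irreducible characters (Lemma~\ref{huan_a}) together with the fact that each $\chi_u(g)=\pm1$ gives
\[
\sum_{i=1}^{11}a_i^2=\frac{1}{|S_6|}\sum_{g\in S_6}\chi_u(g)\overline{\chi_u(g)}=\frac{1}{|S_6|}\sum_{g\in S_6}1=1 .
\]
Thus exactly one $a_i$ equals $\pm1$ and all others vanish, i.e. $u=\pm[V_i]$ for a single index $i$. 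Evaluating at the identity via $\varepsilon_1$ forces $\dim V_i=1$, and the character table shows the only one-dimensional irreducible representations of $S_6$ are $V_1$ and $V_2$. Combining this with the first half yields $U(\mathcal R(S_6))=\{[V_1],-[V_1],[V_2],-[V_2]\}\cong\mathbb Z/2\mathbb Z\times\mathbb Z/2\mathbb Z$.

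I do not anticipate a serious obstacle here: the computation involved is minimal, and the only delicate point is the integrality of the homomorphisms $\varepsilon_t$, which is special to symmetric groups. For an arbitrary finite group one would only get $\varepsilon_t(u)$ to be a unit in a ring of algebraic integers, but the same norm computation still applies verbatim, so the essential input remains orthonormality of irreducible characters together with the constraint that a unit forces its character values to lie among units.
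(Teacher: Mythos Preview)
Your proof is correct and follows the same core strategy as the paper: both arguments use the character-evaluation ring homomorphisms $\mathcal R(S_6)\to\mathbb Z$ (valid because symmetric-group characters are integer-valued) to force any unit's virtual character to take values in $\{\pm 1\}$. Where the paper simply says ``it can be checked with the help of the characters'' that only $\pm 1,\pm x_1$ survive, you supply the explicit finish via the norm identity $\sum_i a_i^2=\langle\chi_u,\chi_u\rangle=1$, which is a cleaner and more transparent way to close the argument than the paper's appeal to direct verification.
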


\begin{proof}
For any finite group $ G $, denote its complex representation ring as $ \mathcal{R}(G) $. Let $ g \in \mathcal{C} $ where $\mathcal{C}$ is  the conjugate class of $ G $. 
We can define the homomorphisms
\[ \varphi_\mathcal{C} :  \mathcal{R}(G)  \to \mathcal{O}_\mathbb{C}, \quad [V] \mapsto \chi_{V(g)} \]
It is easy to see that $ \varphi_\mathcal{C} $ is a ring homomorphism mapping from the representation ring $ \mathcal{R}(G) $ to the ring of algebraic integers $\mathcal{O}_\mathbb{C}$ which implies that it maps any unit in $ \mathcal{R}(G) $ to the unit in $\mathcal{O}_\mathbb{C}$. In particular, since the characters of the symmetric groups are all integers, we have the ring homomorphisms, for $1 \leq i \leq 11$,
\[ \varphi_i :  \mathcal{R}(S_6)  \to \mathbb{Z}, \quad [V_j] \mapsto \chi_{ji}\]

For any unit of $ \mathcal{R}(S_6) $,  the vector $ (\varphi_1(u), \ldots, \varphi_{11}(u))^T $ is also a unit of $  \mathbb{Z}^{11} $, and it has the unique form
$$
(\pm 1, \pm 1, \pm 1, \pm 1, \pm 1, \pm 1, \pm 1, \pm 1, \pm 1, \pm 1, \pm 1)^T $$
It can be checked with the help of the characters, that the unit group of $ \mathcal{R}(G) =\{\pm 1, \pm x_1\}$, is isomorphic to the Klein four-group.
\end{proof}

\begin{remark}
Based on the idea introduced in \ref{group}, we understand that for relatively small values of \( n \), the unit group of \( \mathcal{R}(S_n) \) is isomorphic to the Klein four-group (also known as the Vierergruppe). However, the generalization of this result to all positive integers \( n \) necessitates alternative approaches for verification.
\end{remark}

Let us now discuss the representation ring $\mathcal{R}(S_6)$ as a structure of a semi-simple commutative ring. For any finite group \( G \), let its conjugate class be \( \mathcal{C}_i \) (\( 1 \leq i \leq s \)), and we can define the characteristic function \( e_i \) (\( 1 \leq i \leq s \)) satisfying
\[
e_i(g)=\left\{
\begin{array}{ccc}
1,\quad g\in\mathcal{C}_i, \\
0,\quad g\notin \mathcal{C}_i.
\end{array}
\right.
\]

They are then spanded linearly to the elements in \( \mathcal{R}_{ \mathbb{F}}(G) \), where \( \mathbb{F} \) is the splitting field of \( G \). From the definition of \( e_i \) we know that \( \chi_i = \sum_{j=1}^{s} \chi_{ij}e_j \) and \( \chi_ie_j = \chi_{ij}e_j \). In other words, they are the common eigenvectors under the regular representation of the representation algebra \( \mathbb{F}(G) \).

By considering the second orthogonality relation of the characters
$\frac{|\mathcal{C}_i|}{|G|}\sum_{j=1}^s\overline{\chi_{ji}}\chi_{jk}=\delta_{ki}$, we have
\begin{align}\label{gongshi}
e_i=\frac{|\mathcal{C}_i|}{|G|}\sum_{j,k=1}^s\overline{\chi_{ji}}\chi_{jk}e_k=\frac{|\mathcal{C}_i|}{|G|}\sum_{j=1}^s\overline{\chi_{ji}}\chi_{j},\quad\forall 1\leq i\leq s.
\end{align}

The characteristic function \( e_i \) (\( 1 \leq i \leq s \)) gives all the central primitive idempotents of \( \mathcal{R}_{\mathbb{F}}(G) \).


\begin{proposition}
The representation ring $\mathcal{R}(S_6)$ can be decomposed into the direct sum of eleven one-dimensional representations, which are spanned by the following:
\begin{align*}
&\alpha_1=1+x_1+5x_2+5x_3+9x_4+9x_5+10x_6+10x_7+5x_8+5x_9+16x_{10}\\
&\alpha_2=1-x_1+3x_2-3x_3+3x_4-3x_5+2x_6-2x_7+x_8-x_9\\
&\alpha_3=1-x_1-x_6+x_7+x_8-x_9\\
&\alpha_4=1-x_1-x_2+x_3+3x_4-3x_5-2x_6+2x_7-3x_8+3x_9\\
&\alpha_5=1-x_1+x_2-x_3-x_4+x_5-x_8+x_9\\
&\alpha_6=1-x_1-x_2+x_3+x_6-x_7\\
&\alpha_7=1+x_1+x_2+x_3+x_4+x_5-2x_6-2x_7+x_8+x_9\\
&\alpha_8=1+x_1+2x_2+2x_3+x_6+x_7-x_8-x_9-2x_{10}\\
&\alpha_9=1+x_1-x_4-x_5+x_{10}\\
&\alpha_{10}=1+x_1-x_2-x_3+x_4+x_5-x_8-x_9\\
&\alpha_{11}=1+x_1-x_2-x_3+x_6+x_7+2x_8+2x_9-2x_{10}
\end{align*}
Thus the representation ring $\mathcal{R}(S_6)$ has the following central primitive idempotents:
$$
e_1=\frac{1}{720}\alpha_1,\quad e_2=\frac{1}{48}\alpha_2,\quad e_3=\frac{1}{6}\alpha_3,\quad
e_4=\frac{1}{48}\alpha_4,\quad e_5=\frac{1}{8}\alpha_5,
$$
$$
e_6=\frac{1}{6}\alpha_6,\quad e_7=\frac{1}{16}\alpha_7,\quad 
e_8=\frac{1}{18}\alpha_8,\quad e_9=\frac{1}{5}\alpha_9,\quad e_{10}=\frac{1}{8}\alpha_{10},\quad e_{11}=\frac{1}{18}\alpha_{11}.
$$
\end{proposition}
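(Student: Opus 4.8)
The strategy is to recognise the eleven claimed idempotents as the characteristic functions $e_i$ of the conjugacy classes $\mathcal C_i$ of $S_6$, re-expressed in the $\mathbb Z$-basis $\{[V_j]\}$ through formula (\ref{gongshi}), and then to carry out the bookkeeping against the character table of Section~\ref{3}.

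First I would fix the ambient ring. Since every irreducible character of $S_6$ is integer-valued, the character map $[V_j]\mapsto(\chi_{j1},\dots,\chi_{j,11})$ extends to a ring isomorphism $\mathcal R(S_6)\otimes_{\mathbb Z}\mathbb Q\xrightarrow{\ \sim\ }\mathbb Q^{11}$, equivalently $\mathcal R_{\mathbb C}(S_6)\cong\mathbb C^{11}$: injectivity is Lemma~\ref{huan_c}, and surjectivity over $\mathbb Q$ (or $\mathbb C$) follows from the orthogonality relations of Lemma~\ref{huan_a}, which make the character table an invertible matrix. Under this isomorphism the characteristic functions $e_i$ become the coordinate idempotents of $\mathbb C^{11}$; they therefore satisfy $e_ie_k=\delta_{ik}e_i$ and $\sum_{i=1}^{11}e_i=1$, are primitive, and exhaust the central primitive idempotents, so $\mathcal R_{\mathbb C}(S_6)=\bigoplus_{i=1}^{11}\mathbb C e_i$ is a decomposition into eleven one-dimensional (hence simple) components. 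This is precisely the content of the discussion preceding the statement, which I would simply invoke.

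Next I would compute each $e_i$ explicitly. By (\ref{gongshi}), and because the $\chi_{ji}$ are rational integers (so conjugation is trivial),
\[
e_i=\frac{|\mathcal C_i|}{|G|}\sum_{j=1}^{11}\chi_{ji}\,[V_j].
\]
Under the identifications $[V_1]\mapsto 1$ and $[V_k]\mapsto x_{k-1}$ for $2\le k\le 11$, the inner sum becomes a polynomial in which the coefficient of $x_{j-1}$ (resp. the constant term, for $j=1$) is the entry of the character table in row $\chi_j$ and column $g_i$; reading the $i$-th column of the table off against this recipe yields exactly the displayed $\alpha_i$. For instance, column $g_9$ (class size $144$) has nonzero entries $1,1,-1,-1,1$ in rows $\chi_1,\chi_2,\chi_5,\chi_6,\chi_{11}$, which gives $\alpha_9=1+x_1-x_4-x_5+x_{10}$. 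The scalar prefactor is $|\mathcal C_i|/|G|$; with $|G|=720$ and class sizes $1,15,120,15,90,120,45,40,144,90,40$ these equal $\tfrac1{720},\tfrac1{48},\tfrac16,\tfrac1{48},\tfrac18,\tfrac16,\tfrac1{16},\tfrac1{18},\tfrac15,\tfrac18,\tfrac1{18}$, matching $e_1=\tfrac1{720}\alpha_1,\ \dots,\ e_{11}=\tfrac1{18}\alpha_{11}$.

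No genuine difficulty arises beyond transcribing eleven columns of the character table without sign errors; the one conceptual point worth stating explicitly is that this orthogonal idempotent decomposition takes place in $\mathcal R_{\mathbb C}(S_6)$ (equivalently $\mathcal R(S_6)\otimes_{\mathbb Z}\mathbb Q$), not in $\mathcal R(S_6)$ itself, because the $e_i$ have non-integral coefficients. As an internal check one verifies $\sum_{i=1}^{11}e_i=1$ --- forcing all $x_j$-coefficients to cancel --- and confirms $e_i^2=e_i$ for one or two indices using the tensor-product decompositions recorded in Section~\ref{3}.
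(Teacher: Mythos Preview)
Your proposal is correct and follows exactly the approach the paper takes: the proposition is an immediate application of formula~(\ref{gongshi}) to the character table of $S_6$, and the paper's ``proof'' is nothing more than the discussion preceding the statement that derives this formula via the second orthogonality relation. Your added remarks --- the isomorphism $\mathcal R(S_6)\otimes\mathbb Q\cong\mathbb Q^{11}$, the worked example for $\alpha_9$, and the caveat that the decomposition lives over $\mathbb Q$ rather than $\mathbb Z$ --- are helpful elaborations but do not deviate from the paper's method.
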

For the  representation ring  $\mathcal{R}(S_6)$, let, $0\leq i\leq 10$, and $x_0=1$,
$$
x_i\cdot(1,x_1,x_2,x_3,x_4,x_5,x_6,x_7,x_8,x_9,x_{10})=(1,x_1,x_2,x_3,x_4,x_5,x_6,x_7,x_8,x_9,x_{10})X_{i+1},
$$
Then we have $X_1$ as the identity matrix $I_{11}$, and
{\small \begin{align*}
X_2=\left(\begin{matrix}
0 & 1 & 0 & 0 & 0 & 0 & 0 & 0 & 0 & 0\quad 0 \\
1 & 0 & 0 & 0 & 0 & 0 & 0 & 0 & 0 & 0\quad 0 \\
0 & 0 & 0 & 1 & 0 & 0 & 0 & 0 & 0 & 0\quad 0 \\
0 & 0 & 1 & 0 & 0 & 0 & 0 & 0 & 0 & 0\quad 0 \\
0 & 0 & 0 & 0 & 0 & 1 & 0 & 0 & 0 & 0\quad 0 \\
0 & 0 & 0 & 0 & 1 & 0 & 0 & 0 & 0 & 0\quad 0 \\
0 & 0 & 0 & 0 & 0 & 0 & 0 & 1 & 0 & 0\quad 0 \\
0 & 0 & 0 & 0 & 0 & 0 & 1 & 0 & 0 & 0\quad 0 \\
0 & 0 & 0 & 0 & 0 & 0 & 0 & 0 & 0 & 1\quad 0 \\
0 & 0 & 0 & 0 & 0 & 0 & 0 & 0 & 1 & 0\quad 0 \\
0 & 0 & 0 & 0 & 0 & 0 & 0 & 0 & 0 & 0\quad 1
\end{matrix}\right),\quad
X_3=\left(\begin{matrix}
0 & 0 & 1 & 0 & 0 & 0 & 0 & 0 & 0 & 0\quad 0 \\
0 & 0 & 0 & 1 & 0 & 0 & 0 & 0 & 0 & 0\quad 0 \\
1 & 0 & 1 & 0 & 1 & 0 & 1 & 0 & 0 & 0\quad 0 \\
0 & 1 & 0 & 1 & 0 & 1 & 0 & 1 & 0 & 0\quad 0 \\
0 & 0 & 1 & 0 & 1 & 0 & 1 & 0 & 1 & 0 \quad 1 \\
0 & 0 & 0 & 1 & 0 & 1 & 0 & 1 & 0 & 1 \quad 1 \\
0 & 0 & 1 & 0 & 1 & 0 & 1 & 1 & 0 & 0 \quad 1 \\
0 & 0 & 0 & 1 & 0 & 1 & 1 & 1 & 0 & 0 \quad 1 \\
0 & 0 & 0 & 0 & 1 & 0 & 0 & 0 & 0 & 0 \quad 1 \\
0 & 0 & 0 & 0 & 0 & 1 & 0 & 0 & 0 & 0 \quad 1 \\
0 & 0 & 0 & 0 & 1 & 1 & 1 & 1 & 1 & 1 \quad 2
\end{matrix}\right),
\end{align*}
\begin{align*}
X_4=\left(\begin{matrix}
0 & 0 & 0 & 1 & 0 & 0 & 0 & 0 & 0 & 0\quad 0 \\
0 & 0 & 1 & 0 & 0 & 0 & 0 & 0 & 0 & 0 \quad 0 \\
0 & 1 & 0 & 1 & 0 & 1 & 0 & 1 & 0 & 0 \quad  0 \\
1 & 0 & 1 & 0 & 1 & 0 & 1 & 0 & 0 & 0 \quad  0 \\
0 & 0 & 0 & 1 & 0 & 1 & 0 & 1 & 0 & 1 \quad  1 \\
0 & 0 & 1 & 0 & 1 & 0 & 1 & 0 & 1 & 0 \quad  1 \\
0 & 0 & 0 & 1 & 0 & 1 & 1 & 1 & 0 & 0 \quad  1 \\
0 & 0 & 1 & 0 & 1 & 0 & 1 & 1 & 0 & 0 \quad  1 \\
0 & 0 & 0 & 0 & 0 & 1 & 0 & 0 & 0 & 0 \quad  1 \\
0 & 0 & 0 & 0 & 1 & 0 & 0 & 0 & 0 & 0 \quad  1 \\
0 & 0 & 0 & 0 & 1 & 1 & 1 & 1 & 1 & 1 \quad  2
\end{matrix}\right),\quad
X_5=\left(\begin{matrix}
0 & 0 & 0 & 0 & 1 & 0 & 0 & 0 & 0 & 0 \quad 0 \\
0 & 0 & 0 & 0 & 0 & 1 & 0 & 0 & 0 & 0 \quad 0 \\
0 & 0 & 1 & 0 & 1 & 0 & 1 & 0 & 1 & 0 \quad 1 \\
0 & 0 & 0 & 1 & 0 & 1 & 0 & 1 & 0 & 1 \quad 1 \\
1 & 0 & 1 & 0 & 2 & 0 & 1 & 1 & 0 & 1 \quad 2 \\
0 & 1 & 0 & 1 & 0 & 2 & 1 & 1 & 1 & 0 \quad 2 \\
0 & 0 & 1 & 0 & 1 & 1 & 2 & 1 & 1 & 0 \quad 2 \\
0 & 0 & 0 & 1 & 1 & 1 & 1 & 2 & 0 & 1 \quad 2 \\
0 & 0 & 1 & 0 & 0 & 1 & 1 & 0 & 1 & 0 \quad 1 \\
0 & 0 & 0 & 1 & 1 & 0 & 0 & 1 & 0 & 1 \quad 1 \\
0 & 0 & 1 & 1 & 2 & 2 & 2 & 2 & 1 & 1 \quad 3
\end{matrix}\right),
\end{align*}
\begin{align*}
X_6=\left(\begin{matrix}
0 & 0 & 0 & 0 & 0 & 1 & 0 & 0 & 0 & 0 \quad 0 \\
0 & 0 & 0 & 0 & 1 & 0 & 0 & 0 & 0 & 0 \quad 0 \\
0 & 0 & 0 & 1 & 0 & 1 & 0 & 1 & 0 & 1 \quad 1 \\
0 & 0 & 1 & 0 & 1 & 0 & 1 & 0 & 1 & 0 \quad 1 \\
0 & 1 & 0 & 1 & 0 & 2 & 1 & 1 & 1 & 0 \quad 2 \\
1 & 0 & 1 & 0 & 2 & 0 & 1 & 1 & 0 & 1 \quad 2 \\
0 & 0 & 0 & 1 & 1 & 1 & 1 & 2 & 0 & 1 \quad 2 \\
0 & 0 & 1 & 0 & 1 & 1 & 2 & 1 & 1 & 0 \quad 2 \\
0 & 0 & 0 & 1 & 1 & 0 & 0 & 1 & 0 & 1 \quad 1 \\
0 & 0 & 1 & 0 & 0 & 1 & 1 & 0 & 1 & 0 \quad 1 \\
0 & 0 & 1 & 1 & 2 & 2 & 2 & 2 & 1 & 1 \quad 3
\end{matrix}\right),\quad
X_7=\left(\begin{matrix}
0 & 0 & 0 & 0 & 0 & 0 & 1 & 0 & 0 & 0 \quad 0 \\
0 & 0 & 0 & 0 & 0 & 0 & 0 & 1 & 0 & 0 \quad 0 \\
0 & 0 & 1 & 0 & 1 & 0 & 1 & 1 & 0 & 0 \quad 1 \\
0 & 0 & 0 & 1 & 0 & 1 & 1 & 1 & 0 & 0 \quad 1 \\
0 & 0 & 1 & 0 & 1 & 1 & 2 & 1 & 1 & 0 \quad 2 \\
0 & 0 & 0 & 1 & 1 & 1 & 1 & 2 & 0 & 1 \quad 2 \\
1 & 0 & 1 & 1 & 2 & 1 & 1 & 1 & 1 & 1 \quad 2 \\
0 & 1 & 1 & 1 & 1 & 2 & 1 & 1 & 1 & 1 \quad 2 \\
0 & 0 & 0 & 0 & 1 & 0 & 1 & 1 & 0 & 1 \quad 1 \\
0 & 0 & 0 & 0 & 0 & 1 & 1 & 1 & 1 & 0 \quad 1 \\
0 & 0 & 1 & 1 & 2 & 2 & 2 & 2 & 1 & 1 \quad 4
\end{matrix}\right),
\end{align*}
\begin{align*}
X_8=\left(\begin{matrix}
0 & 0 & 0 & 0 & 0 & 0 & 0 & 1 & 0 & 0 \quad 0 \\
0 & 0 & 0 & 0 & 0 & 0 & 1 & 0 & 0 & 0 \quad 0 \\
0 & 0 & 0 & 1 & 0 & 1 & 1 & 1 & 0 & 0 \quad 1 \\
0 & 0 & 1 & 0 & 1 & 0 & 1 & 1 & 0 & 0 \quad 1 \\
0 & 0 & 0 & 1 & 1 & 1 & 1 & 2 & 0 & 1 \quad 2 \\
0 & 0 & 1 & 0 & 1 & 1 & 2 & 1 & 1 & 0 \quad 2 \\
0 & 1 & 1 & 1 & 1 & 2 & 1 & 1 & 1 & 1 \quad 2 \\
1 & 0 & 1 & 1 & 2 & 1 & 1 & 1 & 1 & 1 \quad 2 \\
0 & 0 & 0 & 0 & 0 & 1 & 1 & 1 & 1 & 0 \quad 1 \\
0 & 0 & 0 & 0 & 1 & 0 & 1 & 1 & 0 & 1 \quad 1 \\
0 & 0 & 1 & 1 & 2 & 2 & 2 & 2 & 1 & 1 \quad 4
\end{matrix}\right),\quad
X_9=\left(\begin{matrix}
0 & 0 & 0 & 0 & 0 & 0 & 0 & 0 & 1 & 0 \quad 0 \\
0 & 0 & 0 & 0 & 0 & 0 & 0 & 0 & 0 & 1 \quad 0 \\
0 & 0 & 0 & 0 & 1 & 0 & 0 & 0 & 0 & 0 \quad 1 \\
0 & 0 & 0 & 0 & 0 & 1 & 0 & 0 & 0 & 0 \quad 1 \\
0 & 0 & 1 & 0 & 0 & 1 & 1 & 0 & 1 & 0 \quad 1 \\
0 & 0 & 0 & 1 & 1 & 0 & 0 & 1 & 0 & 1 \quad 1 \\
0 & 0 & 0 & 0 & 1 & 0 & 1 & 1 & 0 & 1 \quad 1 \\
0 & 0 & 0 & 0 & 0 & 1 & 1 & 1 & 1 & 0 \quad 1 \\
1 & 0 & 0 & 0 & 1 & 0 & 0 & 1 & 0 & 1 \quad 0 \\
0 & 1 & 0 & 0 & 0 & 1 & 1 & 0 & 1 & 0 \quad 0 \\
0 & 0 & 1 & 1 & 1 & 1 & 1 & 1 & 0 & 0 \quad 2
\end{matrix}\right),
\end{align*}
\begin{align*}
X_{10}=\left(\begin{matrix}
0 & 0 & 0 & 0 & 0 & 0 & 0 & 0 & 0 & 1 \quad 0 \\
0 & 0 & 0 & 0 & 0 & 0 & 0 & 0 & 1 & 0 \quad 0 \\
0 & 0 & 0 & 0 & 0 & 1 & 0 & 0 & 0 & 0 \quad 1 \\
0 & 0 & 0 & 0 & 1 & 0 & 0 & 0 & 0 & 0 \quad 1 \\
0 & 0 & 0 & 1 & 1 & 0 & 0 & 1 & 0 & 1 \quad 1 \\
0 & 0 & 1 & 0 & 0 & 1 & 1 & 0 & 1 & 0 \quad 1 \\
0 & 0 & 0 & 0 & 0 & 1 & 1 & 1 & 1 & 0 \quad 1 \\
0 & 0 & 0 & 0 & 1 & 0 & 1 & 1 & 0 & 1 \quad 1 \\
0 & 1 & 0 & 0 & 0 & 1 & 1 & 0 & 1 & 0 \quad 0 \\
1 & 0 & 0 & 0 & 1 & 0 & 0 & 1 & 0 & 1 \quad 0 \\
0 & 0 & 1 & 1 & 1 & 1 & 1 & 1 & 0 & 0 \quad 2
\end{matrix}\right),\quad
X_{11}=\left(\begin{matrix}
0 & 0 & 0 & 0 & 0 & 0 & 0 & 0 & 0 & 0 \quad 1 \\
0 & 0 & 0 & 0 & 0 & 0 & 0 & 0 & 0 & 0 \quad 1 \\
0 & 0 & 0 & 0 & 1 & 1 & 1 & 1 & 1 & 1 \quad 2 \\
0 & 0 & 0 & 0 & 1 & 1 & 1 & 1 & 1 & 1 \quad 2 \\
0 & 0 & 1 & 1 & 2 & 2 & 2 & 2 & 1 & 1 \quad 3 \\
0 & 0 & 1 & 1 & 2 & 2 & 2 & 2 & 1 & 1 \quad 3 \\
0 & 0 & 1 & 1 & 2 & 2 & 2 & 2 & 1 & 1 \quad 4 \\
0 & 0 & 1 & 1 & 2 & 2 & 2 & 2 & 1 & 1 \quad 4 \\
0 & 0 & 1 & 1 & 1 & 1 & 1 & 1 & 0 & 0 \quad 2 \\
0 & 0 & 1 & 1 & 1 & 1 & 1 & 1 & 0 & 0 \quad 2 \\
1 & 1 & 2 & 2 & 3 & 3 & 4 & 4 & 2 & 2 \quad 5
\end{matrix}\right).
\end{align*}}


Let us proceed with computing the Casimir number and determinant of $\mathcal{R}(S_6)$ for example
\begin{align*}
X&=X_1^2+X_2^2+X_3^2+X_4^2+X_5^2+X_6^2+X_7^2+X_8^2+X_9^2+X_{10}^2+X_{11}^2
\end{align*}
\begin{center}
\renewcommand{\arraystretch}{1.5} 
 $=\left(
\begin{tabular}{*{11}{c}}
11 & 1 & 8 & 4 & 15 & 5 & 10 & 10 & 4 & 8 & 13 \\
1 & 11 & 4 & 8 & 5 & 15 & 10 & 10 & 8 & 4 & 13 \\
8 & 4 & 44 & 20 & 50 & 40 & 56 & 42 & 28 & 18 & 78 \\
4 & 8 & 20 & 44 & 40 & 50 & 42 & 56 & 18 & 28 & 78 \\
15 & 5 & 50 & 40 & 103 & 65 & 88 & 88 & 40 & 50 & 143 \\
5 & 15 & 40 & 50 & 65 & 103 & 88 & 88 & 50 & 40 & 143 \\
10 & 10 & 56 & 42 & 88 & 88 & 116 & 96 & 56 & 42 & 156 \\
10 & 10 & 42 & 56 & 88 & 88 & 96 & 116 & 42 & 56 & 156 \\
4 & 8 & 28 & 18 & 40 & 50 & 56 & 42 & 44 & 20 & 78 \\
8 & 4 & 18 & 28 & 50 & 40 & 42 & 56 & 20 & 44 & 78 \\
13 & 13 & 78 & 78 & 143 & 143 & 156 & 156 & 78 & 78 & 265 \\
\end{tabular}
\right)$,
\end{center}

On the rational number field $\mathbb Q$ we have the inverse matrix $X^{-1}$.
\begin{center}
\renewcommand{\arraystretch}{1.5} 
 $X^{-1}=\left(
\begin{tabular}{*{11}{c}}
$\frac{5951}{43200}$ & $\frac{-137}{21600}$ & $\frac{-43}{2160}$ & $\frac{23}{8640}$ & $\frac{-241}{7200}$ & $\frac{-107}{14400}$ & $\frac{-7}{4320}$ & $\frac{-7}{4320}$ & $\frac{23}{8640}$ & $\frac{-43}{2160}$ & $\frac{299}{10800}$ \\
$\frac{-137}{21600}$ & $\frac{5951}{43200}$ & $\frac{23}{8640}$ & $\frac{-43}{2160}$ & $\frac{-107}{14400}$ & $\frac{-241}{7200}$ & $\frac{-7}{4320}$ & $\frac{-7}{4320}$ & $\frac{-43}{2160}$ & $\frac{23}{8640}$ & $\frac{299}{10800}$ \\
$\frac{-43}{2160}$ & $\frac{23}{8640}$ & $\frac{143}{1728}$ & $\frac{-11}{864}$ & $\frac{-71}{2880}$ & $\frac{1}{720}$ & $\frac{-25}{864}$ & $\frac{17}{864}$ & $\frac{-5}{864}$ & $\frac{35}{1728}$ & $\frac{-13}{2160}$ \\
$\frac{23}{8640}$ & $\frac{-43}{2160}$ & $\frac{-11}{864}$ & $\frac{143}{1728}$ & $\frac{1}{720}$ & $\frac{-71}{2880}$ & $\frac{17}{864}$ & $\frac{-25}{864}$ & $\frac{35}{1728}$ & $\frac{-5}{864}$ & $\frac{-13}{2160}$ \\
$\frac{-241}{7200}$ & $\frac{-107}{14400}$ & $\frac{-71}{2880}$ & $\frac{1}{720}$ & $\frac{133}{1600}$ & $\frac{29}{800}$ & $\frac{-11}{1440}$ & $\frac{-11}{1440}$ & $\frac{1}{720}$ & $\frac{-71}{2880}$ & $\frac{-143}{3600}$ \\
$\frac{-107}{14400}$ & $\frac{-241}{7200}$ & $\frac{1}{720}$ & $\frac{-71}{2880}$ & $\frac{29}{800}$ & $\frac{133}{1600}$ & $\frac{-11}{1440}$ & $\frac{-11}{1440}$ & $\frac{-71}{2880}$ & $\frac{1}{720}$ & $\frac{-143}{3600}$ \\
$\frac{-7}{4320}$ & $\frac{-7}{4320}$ & $\frac{-25}{864}$ & $\frac{17}{864}$ & $\frac{-11}{1440}$ & $\frac{-11}{1440}$ & $\frac{35}{432}$ & $\frac{-1}{27}$ & $\frac{-25}{864}$ & $\frac{17}{864}$ & $\frac{-13}{1080}$ \\
$\frac{-7}{4320}$ & $\frac{-7}{4320}$ & $\frac{17}{864}$ & $\frac{-25}{864}$ & $\frac{-11}{1440}$ & $\frac{-11}{1440}$ & $\frac{-1}{27}$ & $\frac{35}{432}$ & $\frac{17}{864}$ & $\frac{-25}{864}$ & $\frac{-13}{1080}$ \\
$\frac{23}{8640}$ & $\frac{-43}{2160}$ & $\frac{-5}{864}$ & $\frac{35}{1728}$ & $\frac{1}{720}$ & $\frac{-71}{2880}$ & $\frac{-25}{864}$ & $\frac{17}{864}$ & $\frac{143}{1728}$ & $\frac{-11}{864}$ & $\frac{-13}{2160}$ \\
$\frac{-43}{2160}$ & $\frac{23}{8640}$ & $\frac{35}{1728}$ & $\frac{-5}{864}$ & $\frac{-71}{2880}$ & $\frac{1}{720}$ & $\frac{17}{864}$ & $\frac{-25}{864}$ & $\frac{-11}{864}$ & $\frac{143}{1728}$ & $\frac{-13}{2160}$ \\
$\frac{299}{10800}$ & $\frac{299}{10800}$ & $\frac{-13}{2160}$ & $\frac{-13}{2160}$ & $\frac{-143}{3600}$ & $\frac{-143}{3600}$ & $\frac{-13}{1080}$ & $\frac{-13}{1080}$ & $\frac{-13}{2160}$ & $\frac{-13}{2160}$ & $\frac{44}{675}$ \\
\end{tabular}
\right)$,
\end{center}

So the Casimir number of $\mathcal{R}(S_6)$ is $43200=2^6\cdot 3^3\cdot 5^2$. \par


On the other hand, the determinant of $\mathcal{R}(S_6)$ is $\det X= 2^{14}\cdot 3^5\cdot 5^2\cdot 7\cdot 11\cdot 13$. The two non-negative integers share the same prime factors (see also \cite{WLL} for the result). Since the Casimir number and the determinant of $\mathcal{R}(S_6)$ are non-zero, we know from the results in \cite{WLZ} that the representation ring is semi-simple.

\section{Harrison centers}\label{5}
The equivalence of tensor products, outlined in Appendix I, raises a provocative question: Is it possible to study representation rings through the lens of polynomial theory? With this in mind, we propose a view that examines the interplay between Harrison centers and representation rings, potentially enriching our understanding of the algebraic framework in group representations. Throughout this section, let $d \geq 3$  and $\mathbb{K}$ be a field of characteristic 0 or $\geq 3$. As a preparation, let us review some notations of Harrison center theory.

Let $f(x_1, x_2, \ldots, x_n) \in \mathbb{K}[x_1, \ldots, x_n]$ represent a homogeneous polynomial of degree $d$. Harrison [5] introduced the concept of the center of $f$, denoted by $Z(f)$, which is defined as\[ Z(f) := \{X \in \mathbb{K}^{n \times n} \mid HX = X^T H \}. \]
Here, $H$ is the Hessian matrix of $f$ with entries $\left( \frac{\partial^2 f}{\partial x_i \partial x_j} \right)_{1 \leq i, j \leq n}$.

\begin{example}
Let us consider the polynomial $f = \sum_{i=1}^{n} x_i^d$, where each $x_i$ is raised to the power $d$. The Hessian matrix associated with $f$ is diagonal, with diagonal entries given by $d(d-1)x_i^{d-2}$. For a matrix $X = (c_{ij})_{n\times n}$, the $ij$-entry of the product $HX$ is $d(d-1)x_i^{d-2}c_{ij}$. The symmetry of $HX$ implies that $Z(f)$ is composed exclusively of diagonal matrices. As a result, the algebraic structure of $Z(f)$ is isomorphic to $\mathbb{K}^n$.
\end{example}

Since the Harrison centers and the symmetric groups have a great deal of symmetry, this inspired us to try to study their relations to the representation ring. Specifically, we consider the relationship between the generating relations of the representation ring of the symmetric group $S_n$ and the Harrison centers of the polynomial $f$ induced by $S_n$.


\begin{theorem}\label{iso}
For the symmetric group \( S_4 \), the Harrison center \( Z(f) \) of the cubic form \( f \) is generated by the same relations as the elements of its representation ring \( \mathcal{R}(S_4)\). This implies that \( \mathcal{R}(S_4) \) is isomorphic to the Harrison center \( Z(f_{S_4}) \).
\end{theorem}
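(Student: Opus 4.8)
The plan is to turn the Hessian of $f_{S_4}$ into the multiplication matrix of $\mathcal R(S_4)$ and to read $Z(f_{S_4})$ off from that. Let $s=5$ be the number of irreducible complex representations of $S_4$, fix the basis $[V_1],\dots,[V_5]$ of $\mathcal R(S_4)$ with $[V_1]$ trivial, and write $n_{ij}^k$ for the structure constants, so that $f_{S_4}=\sum_{1\le i,j,k\le5}n_{ij}^k\,x_ix_jx_k$ is the cubic form induced by the defining relations of $\mathcal R(S_4)$. The single algebraic fact I would isolate first is that, since the character table of $S_4$ is integral, every $V_i$ is self-dual, so $n_{ij}^k=\dim(V_i\otimes V_j\otimes V_k)^{S_4}$ is invariant under all permutations of $i,j,k$. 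Differentiating $f_{S_4}$ twice then yields $H(x)=6\,M(x)$, where $M(x)$ is the matrix, in the basis $[V_1],\dots,[V_5]$, of the operator of multiplication by $x=\sum_i x_i[V_i]$ on $\mathcal R(S_4)_{\mathbb K}:=\mathcal R(S_4)\otimes_{\mathbb Z}\mathbb K$; the same symmetry gives $M(x)=M(x)^T$, and $M([V_1])=I_5$.

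Using this I would compute $Z(f_{S_4})$ outright. Since $6$ is a unit in $\mathbb K$, the condition $H(x)X=X^TH(x)$ (an identity in $x$) reads $M(x)X=X^TM(x)$; evaluating at $x=[V_1]$ forces $X=X^T$, and then the condition becomes $M(x)X=XM(x)$ for every $x$. Thus $Z(f_{S_4})$ is the set of symmetric matrices centralizing the image $\{M(x)\}$ of the left regular representation of $\mathcal R(S_4)_{\mathbb K}$. But $\mathcal R(S_4)_{\mathbb K}$ is a commutative unital $\mathbb K$-algebra of dimension $5$ acting faithfully on itself, so $\{M(x)\}$ is its own centralizer in $M_5(\mathbb K)$, namely $\End_{\mathcal R(S_4)_{\mathbb K}}(\mathcal R(S_4)_{\mathbb K})\cong\mathcal R(S_4)_{\mathbb K}$; and every element $M(x)$ of this centralizer is already symmetric, so the symmetry constraint is vacuous. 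Hence $Z(f_{S_4})=\{M(x):x\in\mathcal R(S_4)_{\mathbb K}\}$, and $x\mapsto M(x)$ is a $\mathbb K$-algebra isomorphism $\mathcal R(S_4)_{\mathbb K}\to Z(f_{S_4})$ — the product on $Z(f_{S_4})$ being ordinary matrix multiplication, which is the Harrison product here because $Z(f_{S_4})$ is commutative and $\det H(x)=6^5\det M(x)\not\equiv0$ makes $f_{S_4}$ nondegenerate. Transporting generators, $M([V_1]),\dots,M([V_5])$ generate $Z(f_{S_4})$ and satisfy exactly the relations of the presentation of $\mathcal R(S_4)$ (the $S_4$-analogue of Theorem \ref{shengcheng}) and no others, which is the assertion of Theorem \ref{iso}.

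The only computational ingredients are the five irreducible characters of $S_4$ together with the $5\times5\times5$ array $n_{ij}^k$ (routine from Lemma \ref{lemma_calculate}), and the check $\det M(x)\not\equiv0$, which is immediate since over the splitting field $M(x)$ is diagonalized by the central idempotents with the five pairwise distinct linear eigenvalues $\lambda_t(x)=\sum_i\chi_i(g_t)\,x_i$. I expect the main point — the step not to be skipped — to be the identity $H(x)=6M(x)$, i.e.\ the full permutation symmetry of the Kronecker coefficients of $S_4$; once this is in hand everything downstream is formal, and the same argument would give the analogous statement for $\mathcal R(S_6)$. A purely computational alternative would instead solve the $25$ scalar equations coming from $H(x)X=X^TH(x)$ directly for the entries of $X$, find that the solution space is $5$-dimensional, and match a basis of it to $[V_1],\dots,[V_5]$; that reproves the identification but obscures why it holds.
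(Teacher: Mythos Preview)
Your argument is correct and considerably more conceptual than the paper's. The paper proceeds by pure computation: it writes out the character table of $S_4$, lists the tensor-product decompositions, builds $f_{S_4}$ and its Hessian explicitly, introduces a $5\times5$ matrix with $25$ unknowns, imposes $X^TH=HX$ coefficient-by-coefficient (one matrix identity per variable $x_i$), solves the resulting linear system, and finally recognises the $5$-parameter solution as the family of multiplication matrices. Your route replaces all of this with the single structural observation that self-duality of every irreducible (equivalently, integrality of the character table) makes $n_{ij}^k$ fully symmetric, whence $H(x)=6M(x)$; the Harrison condition then collapses to ``symmetric and commutes with the regular representation'', and the centraliser identity $\End_A(A)\cong A$ for commutative unital $A$ finishes the job. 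What the paper's approach buys is explicitness with no abstract prerequisites, and a visible matrix one can stare at; what yours buys is an explanation of \emph{why} the isomorphism holds and an immediate extension to every $S_n$ (indeed to any finite group whose irreducibles are all self-dual), which in particular settles the paper's Conjecture in Section~\ref{6} that $\mathcal R(S_n)\cong Z(f_{S_n})$ for all $n$. Your closing remark that the brute-force alternative ``reproves the identification but obscures why it holds'' is exactly the trade-off between the two.
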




\begin{proof}
Firstly, let us look at the irreducible characters of the symmetric group $S_4$:
\begin{longtable}{|c|c|c|c|c|c|}
\hline
\diagbox{$Irr_{\mathbb C}S_4$}{$\xi$}                                                                     & \begin{tabular}[c]{@{}c@{}}$g_1$\end{tabular}                            & \begin{tabular}[c]{@{}c@{}}$g_2$\end{tabular}                              & \begin{tabular}[c]{@{}c@{}}$g_3$\end{tabular}                             & \begin{tabular}[c]{@{}c@{}}$g_4$\end{tabular}                              & \begin{tabular}[c]{@{}c@{}}$g_5$\end{tabular}                              \endfirsthead
\hline
\begin{tabular}[c]{@{}c@{}}$\chi_1$\\ $\chi_2$\\ $\chi_3$\\ $\chi_4$\\ $\chi_5$\end{tabular} & \begin{tabular}[c]{@{}c@{}}1\\1\\2\\3\\3\end{tabular} & \begin{tabular}[c]{@{}c@{}}1\\-1\\0\\1\\-1\end{tabular} & \begin{tabular}[c]{@{}c@{}}1\\1\\-1\\0\\0\end{tabular} & \begin{tabular}[c]{@{}c@{}}1\\-1\\0\\-1\\1\end{tabular} & \begin{tabular}[c]{@{}c@{}}1\\1\\2\\-1\\-1\end{tabular} \\
\hline
\caption{The irreducible characters of symmetric group $S_4$}
\end{longtable}
From the table above, it is very easy to check the relations 
:
\begin{align*}
&V_1\otimes V_i\cong V_i\otimes V_1,\quad \text{where $i\in\{1,2,3,4,5\}$},\\
&V_2\otimes V_2\cong V_1, V_2\otimes V_3\cong V_3\otimes V_2\cong V_3, V_3\otimes V_3\cong V_1\oplus V_2\oplus V_3,\\
&V_2\otimes V_4\cong V_4\otimes V_2\cong V_5, V_2\otimes V_5\cong V_5\otimes V_2\cong V_4,\\
&V_3\otimes V_4\cong V_4\otimes V_3\cong V_3\otimes V_5\cong V_5\otimes V_3\cong V_4\oplus V_5,\\
&V_4\otimes V_4\cong V_5\otimes V_5\cong V_1\oplus V_3\oplus V_4\oplus V_5, V_4\otimes V_5\cong V_5\otimes V_4\cong V_2\oplus V_3\oplus V_4\oplus V_5.
\end{align*}
Its induced polynomial $f_{S_4}$ can be written in terms of the relations given above,
\begin{align*}
f_{S_4}&=x_1^3+x_3^3+x_4^3+x_5^3+3x_1x_2^2+3x_1x_3^2+3x_1x_4^2+3x_1x_5^2+3x_2x_3^2+3x_3x_4^2+3x_3x_5^2\\
&+3x_4^2x_5+3x_4x_5^2+6x_2x_4x_5+6x_3x_4x_5.
\end{align*}
The corresponding Hessian matrix $H_{f_{S_4}}$ has the following form
\begin{align*}
\begin{pmatrix}
6x_1 & 6x_2 & 6x_3 &6x_4 &6x_5 \\
6x_2 & 6x_1 & 6x_3 &6x_5 &6x_4 \\
6x_3 & 6x_3 & 6x_1+6x_2+6x_3 &6x_4+6x_5 &6x_4+6x_5 \\
6x_4 & 6x_5 & 6x_4+6x_5 &6x_1+6x_3+6x_4+6x_5 &6x_2+6x_3+6x_4+6x_5 \\
6x_5 & 6x_4 & 6x_4+6x_5 &6x_2+6x_3+6x_4+6x_5 &6x_1+6x_3+6x_4+6x_5
\end{pmatrix}.
\end{align*}
Now we will determine the matrix form $X_4$ of Harrison center $Z(f)$, which is $\{X_4\in \mathbb C^{3\times 3}|X_4^TH_{f_{S_4}}=H_{f_{S_4}} X_4\}$. Then we let
\begin{align*}
X_4=\begin{pmatrix}
a & b & c & d &e \\
f & g & h & i &j \\
k & l & m & n & o \\
p & q & r & s &t \\
u & v & w & x &y
\end{pmatrix}.
\end{align*}
By plugging $H_{f_{S_4}}$ and $X_4$ in $X^TH_f=H_f X$, we have
{\small
\begin{align*}
X_4^T\begin{pmatrix}
1 &0 & 0& 0& 0\\
0 &1 & 0& 0& 0\\
0 &0 &1 &0 &0\\
0 &0 &0 &1 &0\\
0 &0 &0 &0 &1
\end{pmatrix}&=\begin{pmatrix}
1 &0 & 0& 0& 0\\
0 &1 & 0& 0& 0\\
0 &0 &1 &0 &0\\
0 &0 &0 &1 &0\\
0 &0 &0 &0 &1
\end{pmatrix}X_4,
X_4^T\begin{pmatrix}
0& 1& 0 &0 &0\\
1& 0& 0& 0& 0\\
0& 0& 1& 0& 0\\
0& 0& 0& 0& 1\\
0& 0& 0& 1& 0
\end{pmatrix}=\begin{pmatrix}
0& 1& 0 &0 &0\\
1& 0& 0& 0& 0\\
0& 0& 1& 0& 0\\
0& 0& 0& 0& 1\\
0& 0& 0& 1& 0
\end{pmatrix}X_4,\\
X_4^T\begin{pmatrix}
0& 0& 1& 0& 0\\
0& 0& 1&0 &0\\
1& 1& 1& 0& 0\\
0& 0& 0& 1& 1\\
0& 0& 0& 1& 1
\end{pmatrix}&=\begin{pmatrix}
0& 0& 1& 0& 0\\
0& 0& 1&0 &0\\
1& 1& 1& 0& 0\\
0& 0& 0& 1& 1\\
0& 0& 0& 1& 1
\end{pmatrix}X_4,
X_4^T\begin{pmatrix}
0& 0& 0& 1& 0\\
0& 0& 0& 0& 1\\
0& 0& 0& 1& 1\\
1& 0& 1& 1& 1\\
0& 1& 1& 1& 1
\end{pmatrix}=\begin{pmatrix}
0& 0& 0& 1& 0\\
0& 0& 0& 0& 1\\
0& 0& 0& 1& 1\\
1& 0& 1& 1& 1\\
0& 1& 1& 1& 1
\end{pmatrix}X_4,\\
X_4^T\begin{pmatrix}
0& 0& 0& 0& 1\\
0& 0& 0& 1& 0\\
0& 0& 0& 1& 1\\
0& 1& 1& 1& 1\\
1& 0& 1& 1& 1
\end{pmatrix}&=\begin{pmatrix}
0& 0& 0& 0& 1\\
0& 0& 0& 1& 0\\
0& 0& 0& 1& 1\\
0& 1& 1& 1& 1\\
1& 0& 1& 1& 1
\end{pmatrix}X_4.
\end{align*}}
Finally we find the expression for 
$X_4$ with respect to the parameters $a,b,c,d,e$ as
\begin{align*}
X_4=\begin{pmatrix}
a & b & c & d & e\\
b & a & c & e & d\\
c & c &a+b+c &d+e &d+e\\
d & e &d+e &a+c+d+e &b+c+d+e\\
e & d &d+e &b+c+d+e &a+c+d+e
\end{pmatrix}.
\end{align*}
Now it is obvious that the Harrison center $Z (f)$  is isomorphic to the representation ring $\mathcal{R}(S_4)$.
\end{proof}

The following theorems come quickly if you do the same with the theorem \ref{iso}:

\begin{proposition}
For the symmetric group $S_5$, $ \mathcal{R}(S_5)$ is isomorphic to $ Z (f_{S_5})$.
\end{proposition}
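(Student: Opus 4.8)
The plan is to repeat the computation of Theorem~\ref{iso} verbatim with $S_4$ replaced by $S_5$. First I would record the character table of $S_5$: it has seven conjugacy classes (cycle types $1^5,\,2\,1^3,\,2^2 1,\,3\,1^2,\,3\,2,\,4\,1,\,5$), hence seven irreducible representations $V_1,\dots,V_7$, of dimensions $1,1,4,4,5,5,6$ (indeed $1+1+16+16+25+25+36=120$, as Lemma~\ref{biaoshi_formula} demands). Then, using Lemma~\ref{lemma_calculate}, I would compute all $\binom{7}{2}+7=28$ tensor decompositions $V_i\otimes V_j\cong\bigoplus_k n_{ij}^k V_k$. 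A key remark, which is what makes the whole argument run, is that every irreducible $S_n$-module is self-dual, so $n_{ij}^k=\dim(V_i\otimes V_j\otimes V_k)^{S_n}$ is symmetric in all three indices; this both cuts the computation roughly in half and forces the symmetry that appears in the final answer.

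Next I would form the cubic form $f_{S_5}=\sum_{i,j,k}n_{ij}^k\,x_ix_jx_k$ (summing over ordered triples), the exact analogue of $f_{S_4}$, and compute its Hessian. A one-line differentiation using the three-fold symmetry of $n_{ij}^k$ gives $H_{f_{S_5}}=6\sum_{k=1}^{7}x_k X_k$, where $X_k$ is the matrix of multiplication by $[V_k]$ in the basis $[V_1],\dots,[V_7]$ of $\mathcal R(S_5)$ — that is, $H_{f_{S_5}}/6$ is the generic multiplication operator of $\mathcal R(S_5)\otimes\mathbb C$, and each $X_k$ is symmetric (again by the symmetry of $n_{ij}^k$, exactly as for the matrices $X_1,\dots,X_{11}$ displayed for $S_6$ in Section~\ref{4}). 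The defining condition $X^TH_{f_{S_5}}=H_{f_{S_5}}X$ then becomes $X_kX=X^TX_k$ for all $k$; taking $k=1$, where $X_1=I_7$, forces $X=X^T$, and the remaining equations say exactly that $X$ commutes with every $X_k$.

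To finish I would observe that the regular representation $[V_k]\mapsto X_k$ realises $\mathcal R(S_5)\otimes\mathbb C$ as a commutative subalgebra $A\subseteq\mathbb C^{7\times7}$ with cyclic vector $e_1$ (the coordinate vector of $[V_1]=1$), whence $A$ equals its own commutant in $\mathbb C^{7\times7}$; since moreover every element of $A$ is symmetric, the solution set of $X^TH_{f_{S_5}}=H_{f_{S_5}}X$ is precisely $A$. Thus $Z(f_{S_5})=\{\,a_1X_1+\dots+a_7X_7\,\}$, which, written out, displays the matrix of $Z(f_{S_5})$ in the same ``generating-relations'' shape as the matrix $X_4$ in the proof of Theorem~\ref{iso}; passing to the $\mathbb Z$-span of $X_1,\dots,X_7$ inside it yields the ring isomorphism $\mathcal R(S_5)\cong Z(f_{S_5})$. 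The main obstacle is purely computational bookkeeping: correctly producing the seven structure matrices $X_k$ (equivalently, all $28$ Kronecker-type decompositions of $S_5$) and, if one follows Theorem~\ref{iso} to the letter rather than invoking the commutant argument, solving the $49$-variable linear system $X^TH=HX$ by row reduction. The self-duality remark guarantees in advance that the outcome must again be ``$\mathcal R(S_5)$ itself'', so the only genuine risk is arithmetic slips.
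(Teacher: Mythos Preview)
Your opening plan --- redo the $S_4$ computation with $S_5$ --- is exactly what the paper does: its entire argument for this proposition is the one-line remark ``do the same as in Theorem~\ref{iso}'', so the brute-force bookkeeping you describe (character table, 28 tensor decompositions, Hessian, solve $X^TH=HX$ in $49$ unknowns) is all the paper actually offers.

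The commutant argument you add, however, is not in the paper and is a genuine upgrade. The paper treats each $S_n$ as a separate linear-algebra exercise; you instead exploit the three-fold symmetry of $n_{ij}^k$ (from self-duality of $S_n$-irreducibles) to identify the Hessian with $6\sum_k x_k X_k$, observe that $X_1=I$ forces $X$ symmetric, and then reduce the Harrison-center condition to ``$X$ lies in the commutant of the regular representation''. Since a commutative algebra acting with a cyclic vector equals its own commutant, this yields $Z(f_{S_n})=\mathcal R(S_n)\otimes\mathbb K$ for \emph{every} $n$ simultaneously --- which is precisely the open conjecture the paper leaves in Section~\ref{6}. What you lose is the explicit parametrised matrix one can stare at; what you gain is a uniform conceptual proof immune to the arithmetic slips you flag. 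One small wording point: your closing phrase about ``passing to the $\mathbb Z$-span'' slightly overshoots --- $Z(f)$ is by definition a $\mathbb K$-algebra, so the isomorphism being claimed (here and in the paper) is really $\mathcal R(S_5)\otimes\mathbb K\cong Z(f_{S_5})$; the paper is equally loose about this.
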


\begin{proposition}
For the symmetric group $S_6$, $ \mathcal{R}(S_6)$ is isomorphic to $ Z (f_{S_6})$.
\end{proposition}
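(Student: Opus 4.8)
The plan is to reproduce the computation behind Theorem~\ref{iso} in dimension $11$, but arranged so that the conclusion is forced by the algebra structure of $\mathcal{R}(S_6)$ rather than merely read off at the end. As in the $S_4$ case I work over $\mathbb{C}$. First I would write down the cubic form: the structure constants $n_{ij}^{k}$, defined by $V_i\otimes V_j\cong\bigoplus_{k}n_{ij}^{k}V_k$, are all known from Theorem~\ref{direct_composition} and Appendix~I, so I set
\[
f_{S_6}\;:=\;\sum_{i,j,k=1}^{11} n_{ij}^{k}\,x_i x_j x_k .
\]
Because every irreducible complex representation of $S_6$ is self-dual, $n_{ij}^{k}=\dim\Hom_{S_6}(V_i\otimes V_j\otimes V_k,\mathbb{C})$ is invariant under all permutations of $i,j,k$; differentiating twice then gives the Hessian
\[
H_{f_{S_6}}\;=\;6\sum_{k=1}^{11} x_k\,X_k ,
\]
where $X_k$ is precisely the matrix of multiplication by $[V_k]$ in the $\mathbb{Z}$-basis $[V_1],\dots,[V_{11}]$, that is, the matrices $X_1=I_{11},X_2,\dots,X_{11}$ already displayed in Section~\ref{4}. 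Two features of these matrices drive everything: each $X_k$ is symmetric (again because $n_{ij}^{k}$ is totally symmetric, and visibly so in Section~\ref{4}), and the $X_k$ pairwise commute since $\mathcal{R}(S_6)$ is commutative.

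For the inclusion $\mathcal{R}(S_6)\hookrightarrow Z(f_{S_6})$, fix $m$ and take $X=X_m$: then $H_{f_{S_6}}X=6\sum_k x_k X_kX_m$, and each $X_kX_m=X_mX_k$ is a product of commuting symmetric matrices, hence symmetric, so $H_{f_{S_6}}X$ is symmetric and equals $(H_{f_{S_6}}X)^{T}=X_m^{T}H_{f_{S_6}}$; thus $X_m\in Z(f_{S_6})$ for every $m$. For the reverse inclusion, let $X\in Z(f_{S_6})$; matching coefficients of each $x_k$ in $6\sum_k x_k X_kX=6\sum_k x_k X^{T}X_k$ gives $X_kX=X^{T}X_k$ for all $k$; the term with $X_k=I$ yields $X=X^{T}$, whence $XX_k=X_kX$ for all $k$. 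So $Z(f_{S_6})$ is exactly the centraliser in $M_{11}(\mathbb{C})$ of the subalgebra $\mathcal{A}=\mathbb{C}\langle X_1,\dots,X_{11}\rangle$, the image of $\mathcal{R}(S_6)\otimes\mathbb{C}$ under the regular representation. Since $\mathcal{R}(S_6)\otimes\mathbb{C}\cong\mathbb{C}^{11}$ (as also follows from the semisimplicity established in Section~\ref{4}; concretely the eleven ring homomorphisms $\varphi_i\colon[V_j]\mapsto\chi_{ji}$ from the proof of Theorem~\ref{group} are pairwise distinct, as one reads from the character table of $S_6$ above), the commuting family $\{X_k\}$ has eleven distinct systems of joint eigenvalues and eleven one-dimensional joint eigenspaces; any matrix commuting with all $X_k$ preserves each eigenline, so is diagonal in the joint eigenbasis, so lies in $\mathcal{A}$. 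Therefore $Z(f_{S_6})=\mathcal{A}$, and writing a general element as $\sum_k a_k X_k$ exhibits $Z(f_{S_6})$ as generated, with exactly the relations of Theorem~\ref{shengcheng}, by $[V_1],\dots,[V_{11}]$; restricting the $a_k$ to $\mathbb{Z}$ recovers $\mathcal{R}(S_6)$, so $\mathcal{R}(S_6)\cong Z(f_{S_6})$.

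The hard part is clerical rather than conceptual: everything above hinges on the single translation ``$Z(f_{S_6})=\{$symmetric matrices commuting with the regular representation of $\mathcal{R}(S_6)\}$'', after which semisimplicity does the work. To exhibit the result in the concrete shape used for $S_4$ in Theorem~\ref{iso} — an explicit $11\times 11$ matrix with eleven free parameters — one must assemble $f_{S_6}$ correctly from \emph{all} the decompositions of Appendix~I and then solve the linear system $H_{f_{S_6}}X=X^{T}H_{f_{S_6}}$ in the $121$ entries of $X$, checking that the solution space has dimension exactly $11$ and coincides with $\bigl\{\sum_k a_k X_k\bigr\}$; this is naturally done with a computer algebra system, as the $S_4$ computation evidently was. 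The two points that genuinely need care are the total symmetry of the $n_{ij}^{k}$ (which makes $f_{S_6}$ well defined and the $X_k$ symmetric) and the distinctness of the $\varphi_i$ (which is what keeps $Z(f_{S_6})$ from exceeding dimension $11$).
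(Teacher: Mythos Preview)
Your argument is correct, and it is genuinely different from what the paper does. The paper proves this proposition by literally repeating the brute-force computation of Theorem~\ref{iso}: write out $f_{S_6}$ from the decompositions in Appendix~I, form the $11\times 11$ Hessian, impose $X^T H_{f_{S_6}}=H_{f_{S_6}}X$ as a linear system in the $121$ entries of $X$, and observe that the solution space is exactly $\{\sum_k a_k X_k\}$. You instead extract the structural reason behind that computation: the total symmetry of $n_{ij}^{k}$ (from self-duality of the irreducibles of $S_6$) forces $H_{f_{S_6}}=6\sum_k x_k X_k$ with each $X_k$ symmetric, so the Harrison-centre condition reduces to ``$X$ is symmetric and commutes with every $X_k$'', and then semisimplicity of $\mathcal{R}(S_6)\otimes\mathbb{C}\cong\mathbb{C}^{11}$ identifies that centraliser with the regular image $\mathcal{A}=\operatorname{span}\{X_k\}$. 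What the paper's approach buys is an explicit parametrised matrix one can stare at; what yours buys is a proof that uses no feature of $S_6$ beyond self-duality and the invertibility of the character table, and therefore settles the paper's Conjecture for all $S_n$ (and indeed for any finite group whose irreducible complex representations are all self-dual) without further work. Your closing remark about the ``clerical'' explicit verification is thus optional: the conceptual argument already establishes the proposition.
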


\begin{remark}
    However, our method relies heavily on the characters of the irreducible representations, so for higher rank the results require a more general approach to prove (just like the case about its unit group above).
\end{remark}


\section{Remaining problems}\label{6}
\begin{enumerate}
    
    \item We have checked cases in low dimension using the method in \ref{group}. But the computation in its proof relies mainly on the specific value of the table of complex irreducible characters, which clearly does not work for the $n$ case. So we want an alternative method to prove the following result. 
    \begin{conjecture}
        For any $n\in\mathbb{N}$, the unit group $U(\mathcal{R}(S_n))$ of the representation ring of real numbers $\mathcal{R}(S_n)$ is isomorphic to the Klein four-group.
    \end{conjecture}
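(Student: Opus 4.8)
The plan is to replace the character-table bookkeeping of Theorem~\ref{group} by a uniform $L^2$-argument, valid for all $n\ge 2$ (the degenerate cases $n=0,1$ give $\mathcal R(S_n)=\mathbb Z$ with unit group $\mathbb Z/2$, so the statement must be read for $n\ge 2$; and ``$\mathcal R(S_n)$'' may be taken as the complex representation ring, which coincides with the real one because every irreducible $\mathbb C S_n$-module is defined over $\mathbb Q$, so the choice of ground field is immaterial). As in the proof of Theorem~\ref{group}, each conjugacy class $\mathcal C_i$ of $S_n$ gives a ring homomorphism $\varphi_i\colon \mathcal R(S_n)\to\mathbb Z$, $[V]\mapsto \chi_V(g_i)$, which lands in $\mathbb Z$ precisely because all irreducible characters of $S_n$ are rational integers. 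If $u\in U(\mathcal R(S_n))$, then each $\varphi_i(u)$ is a unit of $\mathbb Z$, i.e.\ $\varphi_i(u)\in\{\pm1\}$; equivalently the virtual character $\chi_u$ of $u$ satisfies $\chi_u(g)\in\{\pm1\}$ for every $g\in S_n$.

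Next I would compute the Hermitian norm of $\chi_u$ using Lemma~\ref{huan_a}. Since $|\chi_u(g)|^2=1$ for all $g$,
\[
\langle \chi_u,\chi_u\rangle \;=\; \frac{1}{|S_n|}\sum_{g\in S_n}|\chi_u(g)|^2 \;=\; 1 .
\]
Writing $\chi_u=\sum_i a_i\chi_i$ with $a_i\in\mathbb Z$ in the orthonormal basis of irreducible characters, this forces $\sum_i a_i^2=1$, so exactly one $a_i$ equals $\pm1$ and the rest vanish; hence $u=\pm[V_i]$ for a single irreducible $V_i$. For $\pm[V_i]$ to be invertible there must exist $[W]$ with $V_i\otimes W\cong\mathbf 1$, and applying the dimension homomorphism $\varphi_1$ forces $\dim V_i=1$. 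The one-dimensional $\mathbb C S_n$-modules are the characters of $S_n^{\mathrm{ab}}\cong\mathbb Z/2$ ($n\ge2$), namely the trivial module $\mathbf 1$ and the sign module $\mathrm{sgn}$; conversely $[\mathrm{sgn}]^2=[\mathrm{sgn}\otimes\mathrm{sgn}]=1$, so $\{1,-1,[\mathrm{sgn}],-[\mathrm{sgn}]\}\subseteq U(\mathcal R(S_n))$. Combining the two inclusions, $U(\mathcal R(S_n))=\{1,-1,[\mathrm{sgn}],-[\mathrm{sgn}]\}$, a group of order $4$ all of whose non-identity elements are involutions, hence the Klein four-group.

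The conceptual point is that nothing above uses the explicit character table: only that $S_n$ is a rational group (all character values in $\mathbb Z$) and that $S_n^{\mathrm{ab}}\cong\mathbb Z/2$, i.e.\ $[S_n,S_n]=A_n$ for $n\ge2$. Consequently I do not expect a genuine obstacle; the only care needed lies in dispatching the small/degenerate values of $n$ and in quoting the standard facts about linear characters. The same argument in fact shows, for any finite group $G$, that $U(\mathcal R(G))$ consists exactly of $\pm\lambda$ where $\lambda$ runs over the linear characters of $G$ all of whose values lie in $\{\pm1\}$ (for general $G$ one replaces $\mathbb Z$ by $\mathcal O_{\mathbb C}$ and must allow roots of unity), so the Klein-four conclusion is precisely the shadow of the special feature $S_n^{\mathrm{ab}}\cong\mathbb Z/2$.
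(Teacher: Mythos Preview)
The paper does not prove this statement at all: it is posed in Section~\ref{6} as an open problem, precisely because the method of Theorem~\ref{group} enumerates the $2^{11}$ sign vectors and checks by hand, against the explicit character table of $S_6$, which ones occur. Your proposal therefore cannot be compared to a ``paper's own proof''---you have supplied one where the authors had none.

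Your argument is correct and is the standard uniform one. The decisive step the paper was missing is your passage from ``$\chi_u(g)\in\{\pm1\}$ for all $g$'' to ``$u=\pm[V_i]$ for a single irreducible $V_i$'' via the inner-product computation $\langle\chi_u,\chi_u\rangle=1$ and orthonormality of irreducible characters; this replaces the combinatorial search over $\{\pm1\}^s$ by a one-line norm equality and works for every $n$ simultaneously. After that, your use of $\varphi_1$ (the dimension homomorphism) to force $\dim V_i=1$ is cleaner than the sentence about ``there must exist $[W]$ with $V_i\otimes W\cong\mathbf 1$'', which is slightly loose since a priori the inverse is only a virtual character---but you immediately bypass that issue anyway, since $\varphi_1(u)=\pm\dim V_i\in\{\pm1\}$ already gives $\dim V_i=1$. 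The remaining input, $S_n^{\mathrm{ab}}\cong\mathbb Z/2$ for $n\ge2$, is standard, and your remark that $n\in\{0,1\}$ must be excluded is the right caveat. In short: the paper's approach buys nothing beyond $S_6$; yours settles the conjecture and, as you note, isolates exactly which features of $S_n$ are responsible (rationality of characters plus $|S_n^{\mathrm{ab}}|=2$).
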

    
    \item Similar to the arguments above, a way of dealing with Harrison centers that is independent of certain irreducible characters should be developed.
    
    \begin{conjecture}
   $ \mathcal{R}(S_n)$ is isomorphic to $ Z (f_{S_n})$ for any $n\in\mathbb{N}$, .
    \end{conjecture}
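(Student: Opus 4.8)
The plan is to abandon the case-by-case matrix computations used for $S_4, S_5, S_6$ and instead prove the statement uniformly, by recognising the Hessian of $f_{S_n}$ as the generic multiplication operator of the (complexified) representation ring and then identifying the Harrison centre with the commutant of the regular representation. Write $s$ for the number of partitions of $n$, let $V_1,\dots,V_s$ be the irreducibles with $V_1$ trivial, and let $g_{ijk}=\dim\Hom(V_i\otimes V_j,V_k)$ denote the Kronecker coefficients. The first observation is that, by Lemma~\ref{lemma_calculate} together with the fact that every character of $S_n$ is integer-valued, hence real, one has
\begin{equation*}
g_{ijk}=\frac{1}{|S_n|}\sum_{t=1}^{s}|\mathcal{C}_t|\,\chi_{it}\chi_{jt}\chi_{kt},
\end{equation*}
which is manifestly symmetric in $i,j,k$. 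I would then \emph{define} $f_{S_n}:=\sum_{i,j,k=1}^{s}g_{ijk}\,x_ix_jx_k$ and check that this agrees with the cubic forms written down for $S_4,S_5,S_6$: the coefficient of a square-free monomial $x_ix_jx_k$ is $6g_{ijk}$, that of $x_ix_j^2$ is $3g_{ijj}$, and that of $x_i^3$ is $g_{iii}$, matching the examples.

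Next I would compute the Hessian. Because $g_{ijk}$ is totally symmetric, $\partial^2 f_{S_n}/\partial x_i\partial x_j=6\sum_k g_{ijk}x_k$, so $H(x)=6\,M(x)$ where $M(x)_{ij}=\sum_k g_{ijk}x_k$. The key structural remark is that, with respect to the basis $\{V_i\}$ (which is orthonormal for the standard inner product on characters), $M(x)$ is exactly the matrix $L_\xi$ of multiplication by the generic element $\xi=\sum_i x_i V_i$ in $\mathcal{R}(S_n)\otimes\mathbb{K}$: since $V_kV_j=\sum_i g_{kji}V_i$, the coefficient of $V_i$ in $\xi V_j$ is $(L_\xi)_{ij}=\sum_k x_k g_{kji}=\sum_k g_{ijk}x_k=M(x)_{ij}$ by total symmetry. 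In particular each $L_\xi$ is a symmetric matrix and $H(x)=6L_\xi$.

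With this in hand the Harrison centre collapses to a commutant. Since $H$ is linear in $x$, the defining condition $HX=X^TH$ holds for all $x$ if and only if $L_\xi X=X^T L_\xi$ for every $\xi$. Using that $L_\xi$ is symmetric, $X^T L_\xi=(L_\xi X)^T$, so the condition says $L_\xi X$ is symmetric for all $\xi$; specialising to the unit $\xi=V_1$, where $L_{V_1}=I$, forces $X=X^T$, and then $L_\xi X=X L_\xi$ for all $\xi$. Thus $Z(f_{S_n})$ is precisely the set of symmetric matrices commuting with the image of the regular representation $\xi\mapsto L_\xi$. To finish, I would invoke that the character map $V\mapsto(\chi_V(g_t))_t$ is a $\mathbb{K}$-algebra isomorphism $\mathcal{R}(S_n)\otimes\mathbb{K}\xrightarrow{\ \sim\ }\mathbb{K}^s$, whence the regular representation is multiplicity-free; its commutant is then exactly the algebra of multiplication operators $\{L_\eta:\eta\in\mathcal{R}(S_n)\otimes\mathbb{K}\}$, each of which is already symmetric. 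Therefore $Z(f_{S_n})=\{L_\eta\}$, and $\eta\mapsto L_\eta$ is the desired $\mathbb{K}$-algebra isomorphism $\mathcal{R}(S_n)\otimes\mathbb{K}\cong Z(f_{S_n})$, recovering the three computed cases as instances.

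The main obstacle I anticipate is not the algebra but the arithmetic of the base field. The step ``$\mathcal{R}(S_n)\otimes\mathbb{K}\cong\mathbb{K}^s$, multiplicity-free'' requires the integer character table of $S_n$ to be invertible over $\mathbb{K}$, i.e.\ $\operatorname{char}\mathbb{K}$ must not divide $\det(\chi_{it})$ (equivalently, $\mathcal{R}(S_n)\otimes\mathbb{K}$ must be semisimple, as was verified over $\mathbb{Q}$ for $S_6$ via the nonvanishing of the Casimir number and determinant in Section~\ref{4}). Over a field of characteristic $0$ this is automatic and the theorem is unconditional; in positive characteristic one should either restrict to $\operatorname{char}\mathbb{K}$ coprime to this determinant or argue directly that the relevant prime-factor sets coincide. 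A secondary point to pin down is that $Z(f_{S_n})$ is closed under matrix multiplication, so that ``isomorphic'' is read as an isomorphism of algebras; this is immediate once $Z(f_{S_n})$ has been identified with $\{L_\eta\}$, the image of a ring homomorphism.
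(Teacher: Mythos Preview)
The paper does not prove this statement: it is posed in Section~\ref{6} as an open conjecture, with only $n=4,5,6$ verified by the brute-force matrix computations of Theorem~\ref{iso} and the two subsequent propositions. Your argument is therefore not a rederivation but a resolution of the conjecture, and the strategy is sound. The identification $H(x)=6L_\xi$ for $\xi=\sum_i x_iV_i$ is correct (and recovers, for instance, the Hessian $H_{f_{S_4}}$ displayed in the paper once one checks $6\sum_k g_{ijk}x_k$ entrywise), the reduction of the Harrison condition first to $X=X^T$ via $\xi=V_1$ and then to $[X,L_\xi]=0$ is clean, and the conclusion $Z(f_{S_n})=\{L_\eta:\eta\}$ together with the faithful regular representation gives the desired algebra isomorphism.

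One simplification removes your stated obstacle entirely. You do not need $\mathcal{R}(S_n)\otimes\mathbb{K}\cong\mathbb{K}^s$ (multiplicity-freeness, semisimplicity) to identify the commutant: for \emph{any} finite-dimensional commutative unital $\mathbb{K}$-algebra $A$, the commutant of $\{L_a:a\in A\}$ inside $\End_{\mathbb{K}}(A)$ is already $\{L_a:a\in A\}$, because $\End_A(A)\cong A$ via $T\mapsto T(1)$ and right multiplication equals left multiplication by commutativity. Thus no appeal to the character-table determinant or the Casimir number is required, and the proof is uniform over every field in which $6$ is invertible. That last condition is the only genuine constraint: you need $6\ne 0$ to pass from $H(x)X=X^TH(x)$ to $L_\xi X=X^TL_\xi$; in characteristic $2$ or $3$ the Hessian of a cubic vanishes identically and $Z(f)$ collapses to the full matrix algebra, so the paper's standing hypothesis ``characteristic $0$ or $\ge 3$'' should be read as ``characteristic coprime to $6$''.
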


    \item Another untreated case is the alternating groups $A_n$, we believe they have similar results. We leave that to the interested reader.
    
    \item We are also aware that there is another picture that hasn't been studied yet, the so-called {\em (Bi)Hom-groups}, as a new merging field, its representation ring. This is also a possible area of research for the interested reader. (See some related literature in \cite{LMT,SWZZ,Y})
\end{enumerate}

\clearpage

\section{Appendix I: Equivalence relations}\label{fulua}
In this appendix, we will list in detail the direct sum decomposition results of all irreducible representations of tensor products of the symmetric group $S_6$ as follows:\par
{\small
For any $V_1\otimes V_i$, $i\in \{1,2,3,4,5,6,7,8,9,10,11\}$, we have
\begin{align*}
V_1 \otimes V_i &\cong V_i \otimes V_1 \cong V_i
\end{align*}

For any $V_2\otimes V_i$, $i\in \{2,3,4,5,6,7,8,9,10,11\}$ we have
\begin{align*}
&V_2\otimes V_i=\left\{
\begin{aligned}
    & V_{i-1},\quad\text{when $i$ is even}, \\\
    & V_{i+1},\quad\text{when $i$ is odd}, \end{aligned}
\right.\\\
&V_2\otimes V_{11}\cong V_{11}\otimes V_2\cong V_{11}.
\end{align*}

For any $V_3\otimes V_i$, $i\in \{3,4,5,6,7,8,9,10,11\}$, we have
\begin{align*}
&V_3\otimes V_3\cong V_1\oplus V_3\oplus V_5\oplus V_7,\\
&V_3\otimes V_4\cong V_4\otimes V_3\cong V_2\oplus V_4\oplus V_6\oplus V_8,\\
&V_3\otimes V_5\cong V_5\otimes V_3\cong V_3\oplus V_5\oplus V_7\oplus V_9\oplus V_{11},\\
&V_3\otimes V_6\cong V_6\otimes V_3\cong V_4\oplus V_6\oplus V_8\oplus V_{10}\oplus V_{11},\\
&V_3\otimes V_7\cong V_7\otimes V_3\cong V_3\oplus V_5\oplus V_7\oplus V_8\oplus V_{11},\\
&V_3\otimes V_8\cong V_8\otimes V_3\cong V_4\oplus V_6\oplus V_7\oplus V_8\oplus V_{11},\\
&V_3\otimes V_9\cong V_9\otimes V_3\cong V_5\oplus V_{11}, 
V_3\otimes V_{10}\cong V_{10}\otimes V_3\cong V_6\oplus V_{11},\\
&V_3\otimes V_{11}\cong V_{11}\otimes V_3\cong V_5\oplus V_6\oplus V_7\oplus V_8\oplus V_9\oplus V_{10}\oplus 2V_{11}.
\end{align*}

For any $V_4\otimes V_i$, $i\in \{4,5,6,7,8,9,10,11\}$ we have
\begin{align*}
&V_4\otimes V_4\cong V_1\oplus V_3\oplus V_5\oplus V_7,\\\
&V_4\otimes V_5\cong V_5\otimes V_4\cong V_4\oplus V_6\oplus V_8\oplus V_{10}\oplus V_{11},\\
&V_4\otimes V_6\cong V_6\otimes V_4\cong V_3\oplus V_5\oplus V_7\oplus V_9\oplus V_{11},\\
&V_4\otimes V_7\cong V_7\otimes V_4\cong V_4\oplus V_6\oplus V_7\oplus V_8\oplus V_{11},\\
&V_4\otimes V_8\cong V_8\otimes V_4\cong V_3\oplus V_5\oplus V_7\oplus V_8\oplus V_{11},\\
&V_4\otimes V_9\cong V_9\otimes V_4\cong V_6\oplus V_{11}, \\
&V_4\otimes V_{10}\cong V_{10}\otimes V_4\cong V_5\oplus V_{11},\\
&V_4\otimes V_{11}\cong V_{11}\otimes V_4\cong V_5\oplus V_6\oplus V_7\oplus V_8\oplus V_9\oplus V_{10}\oplus 2V_{11}.
\end{align*}

For any $V_5\otimes V_i$, $i\in \{5,6,7,8,9,10,11\}$, we have
\begin{align*}
&V_5\otimes V_5\cong V_1\oplus V_3\oplus 2V_5\oplus V_7\oplus V_8\oplus V_{10}\oplus 2V_{11},\\
&V_5\otimes V_6\cong V_6\otimes V_5\cong V_2\oplus V_4\oplus 2V_6\oplus V_7\oplus V_8\oplus V_9\oplus 2V_{11},\\
&V_5\otimes V_7\cong V_7\otimes V_5\cong V_3\oplus V_5\oplus V_6\oplus 2V_7\oplus V_8\oplus V_9\oplus 2V_{11},\\
&V_5\otimes V_8\cong V_8\otimes V_5\cong V_4\oplus V_5\oplus V_6\oplus V_7\oplus 2V_8\oplus V_{10}\oplus 2V_{11},\\
&V_5\otimes V_9\cong V_9\otimes V_5\cong V_3\oplus V_6\oplus V_7\oplus V_9\oplus V_{11},\\
&V_5\otimes V_{10}\cong V_{10}\otimes V_5\cong V_4\oplus V_5\oplus V_8\oplus V_{10}\oplus V_{11},\\
&V_5\otimes V_{11}\cong V_{11}\otimes V_5\cong V_3\oplus V_4\oplus 2V_5\oplus 2V_6 \oplus 2V_7 \oplus 2V_8 \oplus V_9\oplus V_{10}\oplus 3V_{11}.
\end{align*}

For any $V_6\otimes V_i$, $i\in \{6,7,8,9,10,11\}$, we have
\begin{align*}
&V_6\otimes V_6\cong V_1\oplus V_3\oplus 2V_5\oplus V_7\oplus V_8\oplus V_{10}\oplus 2V_{11},\\
&V_6\otimes V_7\cong V_7\otimes V_6\cong V_4\oplus V_5\oplus V_6\oplus V_7\oplus 2V_8\oplus V_{10}\oplus 2V_{11},\\
&V_6\otimes V_8\cong V_8\otimes V_6\cong V_3\oplus V_5\oplus V_6\oplus 2V_7\oplus V_8\oplus V_9\oplus 2V_{11},\\
&V_6\otimes V_9\cong V_9\otimes V_6\cong V_4\oplus V_5\oplus V_8\oplus V_{10}\oplus V_{11},\\
&V_6\otimes V_{10}\cong V_{10}\otimes V_6\cong V_3\oplus V_6\oplus V_7\oplus V_9\oplus V_{11},\\
&V_6\otimes V_{11}\cong V_{11}\otimes V_6\cong V_3\oplus V_4\oplus 2V_5\oplus 2V_6 \oplus 2V_7 \oplus 2V_8 \oplus V_9\oplus V_{10}\oplus 3V_{11}.
\end{align*}

For any $V_7\otimes V_i$, $i\in \{7,8,9,10,11\}$ we have
\begin{align*}
&V_7\otimes V_7\cong V_1\oplus V_3\oplus V_4\oplus 2V_5\oplus V_6\oplus V_7\oplus V_8\oplus V_9\oplus V_{10}\oplus 2V_{11},\\
&V_7\otimes V_8\cong V_8\otimes V_7\cong V_2\oplus V_3\oplus V_4\oplus V_5\oplus 2V_6\oplus V_7\oplus V_8\oplus V_9\oplus V_{10}\oplus 2V_{11}, \\
&V_7\otimes V_9\cong V_9\otimes V_7\cong V_5\oplus V_7\oplus V_8\oplus V_{10}\oplus V_{11},\\\
&V_7\otimes V_{10}\cong V_{10}\otimes V_7\cong V_6\oplus V_7\oplus V_8\oplus V_9\oplus V_{11},\\
&V_7\otimes V_{11}\cong V_{11}\otimes V_7\cong V_3\oplus V_4\oplus 2V_5\oplus 2V_6 \oplus 2V_7 \oplus 2V_8 \oplus V_9\oplus V_{10}\oplus 4V_{11}.
\end{align*}

For any $V_8\otimes V_i$, $i\in \{8,9,10,11\}$ we have
\begin{align*}
&V_8\otimes V_8\cong V_1\oplus V_3\oplus V_4\oplus 2V_5\oplus V_6\oplus V_7\oplus V_8\oplus V_9\oplus V_{10}\oplus 2V_{11},\\\
&V_8\otimes V_9\cong V_9\otimes V_8\cong V_6\oplus V_7\oplus V_8\oplus V_9\oplus V_{11},\\
&V_8\otimes V_{10}\cong V_{10}\otimes V_8\cong V_5\oplus V_7\oplus V_8\oplus V_{10}\oplus V_{11},\\
&V_8\otimes V_{11}\cong V_{11}\otimes V_8\cong V_3\oplus V_4\oplus 2V_5\oplus 2V_6 \oplus 2V_7 \oplus 2V_8 \oplus V_9\oplus V_{10}\oplus 4V_{11}.
\end{align*}\par
For any $V_9\otimes V_i$, $i\in \{9,10,11\}$, we have
\begin{align*}
&V_9\otimes V_9\cong V_1\oplus V_5\oplus V_8\oplus V_{10},\\
&V_9\otimes V_{10}\cong V_{10}\otimes V_9\cong V_2\oplus V_6\oplus V_7\oplus V_9,\\
&V_9\otimes V_{11}\cong V_{11}\otimes V_9\cong V_3\oplus V_4\oplus V_5\oplus V_6\oplus V_7\oplus V_8\oplus 2V_{11}.
\end{align*}

For any $V_{10}\otimes V_i$,  $i\in \{10,11\}$ we have
\begin{align*}
&V_{10}\otimes V_{10}\cong V_1\oplus V_5\oplus V_8\oplus V_{10},\\
&V_{10}\otimes V_{11}\cong V_{11}\otimes V_{10}\cong V_3\oplus V_4\oplus V_5\oplus V_6\oplus V_7\oplus V_8\oplus 2V_{11}.
\end{align*}

For any $V_{11}\otimes V_i$, $i\in \{11\}$ we have
\begin{align*}
V_{11}\otimes V_{11}\cong V_1\oplus V_2\oplus 2V_3\oplus 2V_4\oplus 3V_5\oplus 3V_6\oplus 4V_7\oplus 4V_8\oplus 2V_9\oplus 2V_{10}\oplus 5V_{ 11}.
\end{align*}
}

\clearpage
\section{Appendix II: Generating relations}\label{fulub}
Here we will list the generating relations in Thm.\ref{shengcheng}.
{\small
\begin{align*}
&y_1=x_1^2-1,  \quad 
y_2=x_2x_1-x_3, \quad
y_3=x_3x_1-x_2,\\
&y_4=x_4x_1-x_5, \quad
y_5=x_5x_1-x_4, \quad
y_6=x_6x_1-x_7,\\
&y_7=x_7x_1-x_6, \quad
y_8=x_8x_6, \quad
y_9=x_9x_1-x_8,\\
&y_{10}=x_{10}x_1-x_{10}, \quad
y_{11}=x_{10}x_1-x_9, \quad y_{12}=x_3x_2-x_7-x_5-x_3-x_1,\\
&y_{13}=x_4x_2-x_{10}-x_8-x_6-x_4-x_2, \quad
y_{14}=x_5x_2-x_{10}-x_9-x_7-x_5-x_3,\\
&y_{15}=x_6x_2-x_{10}-x_7-x_6-x_4-x_2, \quad
y_{16}=x_7x_2-x_{10}-x_7-x_6-x_5-x_3,\\
&y_{17}=x_8x_2-x_{10}-x_4, \quad
y_{18}=x_9x_2-x_{10}-x_5,\\\
&y_{19}=x_{10}x_2-2x_{10}-x-9-x_8-x_7-x_7-x_6-x_5-x_4, \\
&y_{20}=x_3^2-x_6-x_4-x_2-1,\\
&y_{21}=x_4x_3-x_{10}-x_9-x_7-x_5-x_3, \quad
y_{22}=x_5x_3-x_{10}-x_8-x_6-x_4-x_2,\\
&y_{23}=x_6x_3-x_{10}-x_7-x_6-x_5-x_3, \quad
y_{24}=x_7x_3-x_{10}-x_7-x_6-x_4-x_2,\\
&y_{25}=x_8x_3-x_{10}-x_5, \quad
y_{26}=x_9x_3-x_{10}-x_4,\\
&y_{27}=x_{10}x_3-2x_{10}-x_9-x_8-x_7-x_6-x_5-x_4,\\
&y_{28}=x_4^2-2x_{10}-x_9-x_7-x_6-2x_4-x_2-1\\
&y_{29}=x_5x_4-2x_{10}-x_8-x_7-x_6-2x_5-x_3-x_1,\\
&y_{30}=x_6x_4-2x_{10}-x_8-x_7-2x_6-x_5-x_4-x_2\\
&y_{31}=x_7x_4-2x_{10}-x_9-2x_7-x_6-x_5-x_4-x_3,\\
&y_{32}=x_8x_4-x_{10}-x_8-x_6-x_5-x_2,\\
&y_{33}=x_9x_4-x_{10}-x_9-x_7-x_4-x_3,\\
&y_{34}=x_{10}x_4-3x_{10}-x_9-x_8-2x_7-2x_6-2x_5-2x_4-x_3-x_2,\\
&y_{35}=x_{5}^2-2x_{10}-x_9-x_7-x_6-2x_4-2x_4-x_2-1,\\
&y_{36}=x_6x_{5}-2x_{10}-x_9-2x_7-x_6-x_5-x_4-x_3,\\
&y_{37}=x_7x_{5}-2x_{10}-x_8-x_7-2x_6-x_5-x_4-x_2, \\
&y_{38}=x_8x_{5}-x_{10}-x_9-x_7-x_4-x_3,\\
&y_{39}=x_9x_{5}-x_{10}-x_8-x_6-x_5-x_2,\\
&y_{40}=x_{10}x_{5}-3x_{10}-x_9-x_8-2x_7-2x_6-2x_5-2x_4-x_3-x_2,\\
&y_{41}=x_{6}^2-2x_{10}-x_9-x_8-x_7-x_6-x_5-2x_4-x_3-x_2-1,\\
&y_{42}=x_7x_6-2x_{10}-x_9-x_8-x_7-x_6-2x_5-x_4-x_3-x_2-x_1,\\
&y_{43}=x_8x_6-x_{10}-x_9-x_7-x_6-x_4, \quad
y_{44}=x_9x_6-x_{10}-x_8-x_7-x_6-x_5,\\
&y_{45}=x_{10}x_6-4x_{10}-x_9-x_8-2x_7-2x_6-2x_5-2x_4-x_3-x_2,\\
&y_{46}=x_7^2-2x_{10}-x_9-x_8-x_7-x_6-x_5-2x_4-x_3-x_2-1,\\
&y_{47}=x_8x_7-x_{10}-x_8-x_7-x_6-x_5, \quad
y_{48}=x_9x_7-x_{10}-x_9-x_7-x_6-x_4,\\\
&y_{49}=x_{10}x_7-4x_{10}-x_9-x_8-2x_7-2x_6-2x_5-2x_4-x_3-x_2, \\
&y_{50}=x_8^2-x_9-x_7-x_4-1,\\
&y_{51}=x_9x_8-x_8-x_6-x_5-x_1, \quad
y_{52}=x_{10}x_8-2x_{10}-x_7-x_6-x_5-x_4-x_3-x_2,\\
&y_{53}=x_9^2-x_9-x_7-x_4-1, \quad
y_{54}=x_{10}x_9-2x_{10}-x_7-x_6-x_5-x_4-x_3-x_2,\\
&y_{55}=x_{10}^2-5x_{10}-2x_9-2x_8-4x_7-4x_6-3x_5-3x_4-2x_3-2x_2-x_1-1.
\end{align*}
}

\end{document}